\newcommand{\ignore}[1]{}
\renewcommand{\Re}{\operatorname{Re}}
\newcommand{\Aut}{\operatorname{Aut}}
\newcommand{\abs}[1]{\left\lvert {#1} \right\rvert}
\newcommand{\sabs}[1]{\lvert {#1} \rvert}
\newcommand{\norm}[1]{\left\lVert {#1} \right\rVert}
\newcommand{\snorm}[1]{\lVert {#1} \rVert}
\newcommand{\C}{{\mathbb{C}}}
\newcommand{\R}{{\mathbb{R}}}
\newcommand{\N}{{\mathbb{N}}}
\newcommand{\D}{{\mathbb{D}}}
\newcommand{\bB}{{\mathbb{B}}}
\newcommand{\bH}{{\mathbb{H}}}
\newcommand{\sZ}{{\mathcal{Z}}}
\newtheorem{thm}{Theorem}[section]
\newtheorem{prop}[thm]{Proposition}
\newtheorem{lemma}[thm]{Lemma}
\theoremstyle{definition}
\theoremstyle{remark}
\author{Ji\v{r}\'{\i} Lebl}
\thanks{The author was in part supported by Simons Foundation collaboration grant 710294.}
\address{Department of Mathematics, Oklahoma State University,
Stillwater, OK 74078, USA}
\email{lebl@okstate.edu}
\date{November 20, 2023}
\title{Exhaustion functions and normal forms for proper maps of balls}
\subjclass[2020]{32H35, 32U10}
\begin{document}


\begin{abstract}
We study a relationship between rational proper maps of balls in different dimensions and strongly plurisubharmonic exhaustion functions of the unit ball induced by such maps.  Putting the unique critical point of this exhaustion function at the origin leads to a normal form for rational proper maps of balls.  The normal form of the map, which is up to composition with unitaries, takes the origin to the origin, and it normalizes the denominator by eliminating the linear terms and diagonalizing the quadratic part.  The singular values of the quadratic part of the denominator are spherical invariants of the map.  When these singular values are positive and distinct, the normal form is determined up to a finite subgroup of the unitary group.  We also study which denominators arise for cubic maps, and when we do not require taking the origin to the origin, which maps are equivalent to polynomials.
\end{abstract}

\maketitle



\section{Introduction} \label{section:intro}

Let $\bB_n \subset \C^n$ be the unit ball.
It is a well-known theorem of Alexander~\cite{Alexander}
that if $f \colon \bB_n \to \bB_n$ is a proper holomorphic map
and $n \geq 2$, then $f$ is an automorphism of $\bB_n$ and hence
a linear fractional map.
If $n=1$, the classical theorem of Fatou says that $f$ equals a finite
Blaschke product.
A natural question is to classify proper holomorphic maps
$f \colon \bB_n \to \bB_N$ when $N \not= n$.  If $N < n$,
no such proper maps exist.  By a result
of Dor~\cite{Dor}, there exist holomorphic proper maps
$f \colon \bB_n \to \bB_{n+1}$, which are continuous up to the
boundary, but not smooth there.
On the other hand, via a theorem of Forstneri\v{c}~\cite{Forstneric}, if
a proper holomorphic map $f \colon \bB_n \to \bB_N$, $n \geq 2$,
extends smoothly up to the boundary, then $f$ is
rational and of degree bounded by an expression of $n$ and $N$.
For a recent comprehensive treatment of rational maps of spheres, see
D'Angelo~\cite{DAngelo:spheresbook}.

Two maps $f$ and $F$ are \emph{spherically equivalent} if 
$F=\tau \circ f \circ \psi$, where $\psi \in \Aut(\bB_n)$ and $\tau \in
\Aut(\bB_N)$.
A quantity invariant under this equivalence is
a \emph{spherical invariant}.
The study of normal forms for rational proper maps for a fixed $n$ and $N$
(under spherical equivalence)
is an area of active research and the normal form is known when the
codimension $N-n$ is small.
For example, Faran~\cite{Faran:B2B3} gives 4 equivalence classes when $n=2$
and $N=3$, up to automorphisms, so-called spherical equivalence.
Faran~\cite{Faran:firstgap} and Huang~\cite{Huang:firstgap} show there is
only one equivalence class when $N < 2n-1$.
Many more cases have been handled,
see for example \cites{Hamada05,HJX06,Ebenfelt13,HJY14} and the references within.
In general, there are many spherical equivalence classes.
When $N \geq 2n$, there are uncountably many:
Indeed, D'Angelo and the author showed in \cite{DL:homotopies},
that while there are only finitely many homotopy equivalence classes,
any homotopy of two spherically inequivalent maps contains
uncountably many spherical equivalence classes.  See also
D'Angelo~\cite{DAngelo:book}.
In this paper we offer a computable normal form for spherical equivalence
up to a subgroup of the unitary group.  Generically, our normal form is up to a
small finite subgroup, but not always.

The approach we take in this paper is to not fix $N$, but to fix
the degree instead.
Suppose that $f$ is rational,
written as $\frac{p}{g}$ where $p \colon \C^n \to \C^N$ and
$g \colon \C^n \to \C$ are polynomials.  If $\frac{p}{g}$ are written
in lowest terms, we consider the degree $d$ of $f$ to be the maximum of the
degrees of $p$ and $g$.  The degree of $g$ is always less than or equal to that of $p$,
and if $p(0) = 0$, then the degree of $g$ is $d-1$ or less,
see D'Angelo~\cite{DAngelo:book}.
The smallest dimension $N'$ such that $f$ maps into an affine $N'$
dimensional subspace is called the \emph{embedding dimension} of $f$.
If we fix the degree $d$, then there is
a maximum embedding dimension for any degree $d$ proper rational map,
it is the number of nonconstant linearly independent monomials of degree
$d$ or less.
We will focus on the denominator.
Given any polynomial that does not vanish on the closed unit ball,
there exists a rational proper map of balls in lowest terms
with the given polynomial as denominator, see
D'Angelo~\cite{DAngelo:spheresbook} (see also
Catlin--D'Angelo~\cite{CatlinDAngelo}).

When the degree $d=1$, every
rational proper map is equivalent to a linear embedding, $z \oplus 0$.
When $d=2$, every rational proper map is equivalent to a polynomial map taking the origin to
the origin by the author's prior work~\cite{Lebl:normal},
and this result also
follows from the theorem below.
So in degrees 1 and 2, the denominator $g$ can be made constant.
In higher degrees, we get the following result.

\begin{thm} \label{thm:normdenom}
Suppose $f \colon \bB_n \to \bB_N$ is a
rational proper map of degree $d$.
Then there exist numbers
$0 \leq \sigma_1 \leq \sigma_2 \leq \cdots \leq \sigma_n \leq \frac{d-1}{2}$,
and automorphisms
$\psi \in \Aut(\bB_n)$ and
$\tau \in \Aut(\bB_N)$
such that
$\tau \circ f \circ \psi = \frac{P}{G}$ (in lowest terms), where
$P(0) = 0$,
$G$ is of degree at most $d-1$, and the homogeneous expansion of $G$ is
\begin{equation}
G(z) = 1 + G_2(z) + G_3(z) + \cdots + G_{d-1}(z), 
\qquad
\text{where}
\quad 
G_2(z) =
\sum_{k=1}^n \sigma_k z_k^2 .
\end{equation}
That is, $G$ has no linear terms, and the quadratic part is diagonalized.
The $\sigma_1,\ldots,\sigma_n$ are
spherical invariants and $f$ is in normal form up to composition
with unitary maps.
\end{thm}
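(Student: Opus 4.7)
The plan is to associate to $f$ a canonical strongly plurisubharmonic exhaustion function $\Phi_f$ of $\bB_n$, show that $\Phi_f$ has a unique critical point $a \in \bB_n$, use $\Aut(\bB_n) \times \Aut(\bB_N)$ to move $a$ to the origin and $f(a)$ to the origin simultaneously, and then exploit the residual unitary freedom via the Autonne--Takagi factorization to diagonalize the quadratic part of the denominator.

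Writing $f = p/g$ in lowest terms, I would use the well-known Hermitian decomposition
\[
|g(z)|^2 - |p(z)|^2 = (1-|z|^2)\, H(z,\bar z),
\]
where $H$ is a Hermitian polynomial, positive on $\overline{\bB_n}$ by properness together with the lowest-terms assumption. The candidate exhaustion is built from the natural ball potentials together with $H$ and $|g|^2$, chosen so that $\Phi_f$ is strictly plurisubharmonic on $\bB_n$, tends to $+\infty$ on $\partial \bB_n$, and is sufficiently canonical with respect to $f$ that its critical point plays the role of an invariant. The main technical obstacle, which I expect to be the hardest step, is proving that $\Phi_f$ has a unique critical point in $\bB_n$: existence from the exhaustion property is routine, but uniqueness is not automatic for a strictly plurisubharmonic exhaustion and must draw on the specific polynomial structure of $|g|^2$, $|p|^2$, and $H$.

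Given the unique critical point $a$, pick $\psi \in \Aut(\bB_n)$ with $\psi(0) = a$, and $\tau \in \Aut(\bB_N)$ sending $f(a)$ to $0$. Then $\tau \circ f \circ \psi = P/G$ in lowest terms, with $G$ normalized so $G(0) = 1$ and $P(0) = 0$. D'Angelo's inequality recalled in the introduction then gives $\deg G \le d-1$. The critical-point condition, transported through the analogous identity $|G|^2 - |P|^2 = (1-|z|^2)\widetilde H$, should force the linear part $G_1$ of $G$ to vanish: the choice of $\Phi_f$ is arranged precisely so that $d\Phi_f(0) = 0$ translates into this condition on the coefficients of $G$ at the origin, by matching first-order Taylor data of both sides of the Hermitian identity.

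The remaining stabilizer of $0 \in \bB_n$ is $U(n)$, and the corresponding $U(N)$-action on the target leaves the denominator $G$ invariant. Under $z \mapsto Uz$, the quadratic part $G_2(z) = z^T A z$ with $A$ complex symmetric transforms as $A \mapsto U^T A U$, and by the Autonne--Takagi factorization one can choose $U$ so that $U^T A U = \mathrm{diag}(\sigma_1,\ldots,\sigma_n)$ with the $\sigma_k \ge 0$ equal to the singular values of $A$; a coordinate permutation arranges them in increasing order. Because the procedure is tied canonically to $f$ via $\Phi_f$, these $\sigma_k$ are spherical invariants. Finally, the upper bound $\sigma_k \le (d-1)/2$ comes from restricting $G$ to the axis $z = t e_k$: this is a one-variable polynomial of degree at most $d-1$ with value $1$ at $t = 0$, vanishing linear coefficient $\sigma_k$, and no zeros on $|t| \le 1$. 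Factoring it as $\prod_j(1-t/\zeta_j)$ with $|\zeta_j| \ge 1$ and $\sum_j 1/\zeta_j = 0$, Newton's identity $e_2 = -\tfrac{1}{2} p_2$ gives $\sigma_k = |e_2(1/\zeta_j)| \le \tfrac{1}{2}\sum_j 1/|\zeta_j|^2 \le \tfrac{1}{2}(d-1)$.
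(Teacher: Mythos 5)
Your proposal correctly identifies the paper's strategy---attach to $f$ a canonical strongly plurisubharmonic exhaustion, show it has a unique critical point, move that point (and its image) to the origin, and diagonalize the quadratic part of the denominator by Autonne--Takagi---and your argument for the bound $\sigma_k \le \frac{d-1}{2}$ via Newton's identity on the restricted one-variable polynomial is correct (it is essentially the paper's Proposition~\ref{prop:g}). But two genuine gaps remain, and they contain the bulk of the work. First, you never actually write down $\Phi_f$. The paper's choice is $\Lambda_f(z,\bar z) = \frac{\sabs{g(z)}^2 - \snorm{p(z)}^2}{(1-\snorm{z}^2)^d}$, equivalently $\frac{H(z,\bar z)}{(1-\snorm{z}^2)^{d-1}}$ in your notation, and this choice is forced: the function must be invariant under target automorphisms (via Lemma~\ref{lemma:targetautform}), must transform by a positive constant multiple under source automorphisms so that its critical point is an invariant, and---crucially---the equation $\nabla \Phi_f(\alpha)=0$ must be exactly equivalent to the vanishing of the linear part of the transformed denominator. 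That last equivalence is a real computation (Lemma~\ref{lemma:alphaiscrit} in the paper); asserting that ``$\Phi_f$ is arranged precisely so that $d\Phi_f(0)=0$ translates into this condition'' without specifying $\Phi_f$ is circular.

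Second, and more seriously, you flag uniqueness of the critical point as the hardest step and then leave it unproved. Strict plurisubharmonicity of $\Phi_f$ plus the exhaustion property do not by themselves rule out multiple critical points. The paper's resolution is to show that every critical point is a strict local minimum by computing the real Hessian there after normalizing to the origin; the determinant is $4\bigl(d-\snorm{p'(0)}^2\bigr)^2 - 16\sabs{g_2}^2$, and its positivity requires both the Schwarz-lemma bound $\snorm{p'(0)}<1$ and precisely the estimate $\sabs{g_2}<\frac{d-1}{2}$ from Proposition~\ref{prop:g} (so the same lemma you invoke for the final bound is needed earlier, and in a different role). Once every critical point is a strict local minimum, uniqueness follows from Courant's mountain pass theorem or a Morse-theoretic Euler characteristic count on a slightly shrunken closed ball. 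Without a version of this Hessian computation, the claim that a canonical unique critical point exists---the whole point of the method---is unsupported.
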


If $F$ and $\Phi$ are spherically equivalent and in the form above,
then the $\sigma_1,\ldots,\sigma_n$ for both maps are equal and
$\Phi = U \circ F \circ V$, where $U$ and $V$ are unitary matrices
and $G_2 \circ V = G_2$.

The normal form is found by locating the critical point of what we call
a $\Lambda$-function.
Let $f = \frac{p}{g}$ be a rational proper
map of balls written in lowest terms.  Define $\Lambda \colon \bB_n \to \R$,
\begin{equation} \label{eq:Lambda}
\Lambda(z,\bar{z}) = \Lambda_f(z,\bar{z}) =
\frac{\sabs{g(z)}^2-\snorm{p(z)}^2}{(1-\snorm{z}^2)^d} .
\end{equation}
Note that D'Angelo--Xiao~\cite{DX} (Theorem 3.2) have used a similar expression to
study groups associated to rational proper maps of balls.
See also Theorem 5.5 in D'Angelo~\cite{DAngelo:herm}.
In this paper, we prove that this $\Lambda$-function relates
rational proper maps of balls to the strong (pseudo)convexity
of the source ball.

\begin{thm} \label{thm:Lambda}
\begin{samepage}
Suppose $f = \frac{p}{g} \colon \bB_n \to \bB_N$ is a rational proper map
of degree $d > 1$ written in lowest terms and define $\Lambda = \Lambda_f$ as in
\eqref{eq:Lambda}.  Then
\begin{enumerate}[(i)]
\item
For $\tau \in \Aut(\bB_N)$, the $\Lambda$-function for $f$ and $\tau \circ f$ are
identical,
$\Lambda_f = \Lambda_{\tau \circ f}$.
\item
If $\psi \in \Aut(\bB_n)$, then
$\Lambda_f \circ \psi = C \Lambda_{f \circ \psi}$ for
a constant~$C$.
\item
$\Lambda$ is a strongly plurisubharmonic exhaustion function for $\bB_n$:
$\Lambda$ is strongly plurisubharmonic and $\Lambda(z)$ goes to $+\infty$ as $z \to \partial \bB_n$.
In fact, $\Lambda$ is strongly convex near $\partial \bB_n$.
\item
$\Lambda$ has a unique critical point (a minimum) in $\bB_n$.
\end{enumerate}
\end{samepage}
\end{thm}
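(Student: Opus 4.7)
Parts (i) and (ii) I would prove by direct computation using the Möbius-transformation identities
\begin{equation*}
1 - \|\tau(w)\|^2 = \frac{(1-\|a\|^2)(1-\|w\|^2)}{|1-\langle w,a\rangle|^2}, \qquad (1-\|\psi(z)\|^2)^d = \frac{(1-\|b\|^2)^d(1-\|z\|^2)^d}{|1-\langle z,b\rangle|^{2d}},
\end{equation*}
for automorphisms $\tau \in \Aut(\bB_N)$ and $\psi \in \Aut(\bB_n)$ moving suitable points $a$ and $b$. For (i), substituting $f = p/g$ into $\tau$ and clearing denominators yields polynomial representatives $\tilde P$, $\tilde G$ of $\tau \circ f$ satisfying $|\tilde G|^2 - \|\tilde P\|^2 = (1-\|a\|^2)(|g|^2 - \|p\|^2)$; the constant is absorbed when one passes to the canonical lowest-terms representative. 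For (ii), the factor $|1-\langle z,b\rangle|^{2d}$ appearing in the pulled-back denominator of $(1-\|\psi(z)\|^2)^d$ is cancelled by the common factor $(1-\langle z,b\rangle)^d$ that appears in both $p\circ\psi$ and $g\circ\psi$; the leftover constant is $C = (1-\|b\|^2)^{-d}$. In both parts one must additionally verify that the degree $d$ is preserved and that no new polynomial common factor appears.

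For (iii), the structural input is that $|g|^2 - \|p\|^2$ vanishes to exactly first order on $\partial \bB_n$, because $\|f\|^2$ attains its supremum $1$ there and the Hopf lemma applied to $1 - \|f\|^2$ gives a positive outward normal derivative (using also that $g \neq 0$ on $\overline{\bB_n}$). Thus
\begin{equation*}
|g(z)|^2 - \|p(z)\|^2 = (1-\|z\|^2)\, q(z,\bar z)
\end{equation*}
for a real polynomial $q$ strictly positive on $\overline{\bB_n}$, and hence $\Lambda = q/(1-\|z\|^2)^{d-1}$, making exhaustion automatic. For strong plurisubharmonicity, write $\rho = 1 - \|z\|^2$ and compute $\partial\bar\partial \Lambda$ by the product rule: the positive-definite term $(d-1)q\rho^{-d}\, I$ (from $\partial\bar\partial\rho^{-(d-1)}$) together with the positive semi-definite rank-one term $d(d-1)q\rho^{-d-1}\, \bar z z^*$ dominates the bounded cross-terms involving $\partial q$, $\bar\partial q$, and $\partial\bar\partial q$ throughout $\bB_n$ (with the dominance becoming overwhelming as $z \to \partial \bB_n$, which also yields strong real convexity in a boundary collar).

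For (iv), existence of a minimum follows from continuity, positivity, and the exhaustion property. For uniqueness, my plan is to use (ii) to translate any given critical point to the origin, and then (i) to further normalize so that $f(0) = 0$, $g(0) = 1$. The equation $\partial_{z_j}\Lambda(0) = 0$ then forces $g$ to have no linear terms ($g = 1 + g_2 + g_3 + \cdots$). Expanding $\Lambda$ to second order at the origin yields
\begin{equation*}
\Lambda(z) = 1 + z^*(dI - A^*A)z + 2\Re\bigl(g_2(z)\bigr) + O(\|z\|^3),
\end{equation*}
where $A$ is the linear part of $p$ and $g_2$ the quadratic part of $g$. The key claim is that this real quadratic form is positive definite. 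Granting this, every critical point is a nondegenerate local minimum, and since $\{\Lambda < c\}$ must be connected for large $c$ (as $\Lambda$ exhausts $\bB_n$), a Morse-theoretic argument rules out multiple minima: merging two distinct basins would require an intermediate critical point of index $1$, which is now impossible.

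The principal obstacle is proving this positive-definiteness. Part (iii) controls only the Hermitian part $dI - A^*A$ of the Hessian, and dominating the symmetric $g_2$-contribution requires further input. I would extract the bidegree $(2,0)$, $(1,1)$, and $(0,2)$ parts of $q$ from $|g|^2 - \|p\|^2 = (1-\|z\|^2)q$, obtaining $q_{(2,0)} = g_2$, $q_{(0,2)} = \bar g_2$, and $q_{(1,1)}(z,\bar z) = z^*(I - A^*A)z$. Evaluating $q$ on the circles $\{e^{i\theta}v : \theta \in \R\}$ for unit vectors $v$ and minimizing over $\theta$ should yield the comparison $\|Av\|^2 + 2|g_2(v)| < d$, which is exactly what is needed for positive-definiteness of the full real Hessian. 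Carrying out this estimate cleanly, especially handling the higher-bidegree contributions of $q$, is where I expect the main work.
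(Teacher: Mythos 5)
Your parts (i) and (ii) are essentially the paper's computation, so no comment needed there, and your Hopf-lemma derivation of $q > 0$ on $\overline{\bB_n}$ matches the paper's Lemma~\ref{lemma:qpositive}.

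The gap in (iii) is the claim that the positive terms from differentiating $\rho^{-(d-1)}$ dominate the ``cross-terms'' \emph{throughout} $\bB_n$. They do not on general principles: the cross-terms and the $\partial\bar\partial q$ term carry a factor $\rho^{-d}$ and $\rho^{-(d-1)}$ respectively, which is the same order (or only one order worse) than the full-rank positive term $(d-1)q\rho^{-d}I$, so away from $\partial\bB_n$ there is no quantitative domination from positivity of $q$ alone. (The rank-one term at rate $\rho^{-d-1}$ only helps in the $\bar z$-direction.) The paper avoids this by using part (ii) and the transitivity of $\Aut(\bB_n)$ to reduce strict plurisubharmonicity to a computation at the origin, where a Schwarz-lemma argument (restrict $f$ to a complex line, write $f(z)=zF(z)$, and use the maximum principle to get $\|p'(0)\| < 1$) produces the explicit lower bound $d + |g'(0)|^2 - \|p'(0)\|^2 > d-1 > 0$ for the complex Laplacian; the strong convexity near the boundary (Proposition~\ref{prop:convexnearbndry}) is then proved separately and combined with strict plurisubharmonicity on a compact interior to upgrade strict to strong. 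Your estimate, as written, proves the statement only in a boundary collar.

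In (iv), your reduction to the positive-definiteness of the real quadratic form $z^*(dI - A^*A)z + 2\Re(g_2(z))$ is exactly the paper's, and you correctly identify the obstacle. The missing ingredient is the elementary algebraic fact that the paper records as Proposition~\ref{prop:g}: if a one-variable polynomial $g(w) = 1 + g_2 w^2 + \cdots + g_k w^k$ has no zeros on $\overline{\D}$, then $\sabs{g_2} < k/2$ (proved by writing $g = \prod(1 - r_j w)$, using $\sum r_j = 0$ to get $g_2 = -\tfrac12\sum r_j^2$, and bounding by $\sabs{r_j} < 1$). Applying this to the restriction of $g$ to each complex line through the origin (which has no zeros on $\overline{\D}$ by Lemma~\ref{lemma:chiappari}) gives precisely $\sabs{g_2(v)} < (d-1)/2$ for unit $v$, which together with $\|Av\| < 1$ yields your desired inequality $\|Av\|^2 + 2\sabs{g_2(v)} < d$. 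Your alternative route of minimizing $q$ over circles $\{e^{i\theta}v\}$ does not obviously close: the higher-bidegree Fourier coefficients of $q\restriction_{\text{circle}}$ also depend on $\theta$, and $q > 0$ on the circle does not directly control the size of the $(2,0)$ coefficient. The Morse/mountain-pass conclusion you give for uniqueness matches the paper.
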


Strong (rather than strict) plurisubharmonicity (resp.\ convexity)
means that not only is the 
complex (resp.\ real) Hessian of $\Lambda$ positive definite on $\bB_n$, but its least
eigenvalue is uniformly bounded from below on $\bB_n$.

The fourth item is the key to proving Theorem~\ref{thm:normdenom}.  
The normal form is attained by
an automorphism $\psi$ that takes the unique critical point of $\Lambda$
to the origin.  Via the uniqueness, the map is normalized up to
the unitary group.  The new denominator will have no linear terms,
and the $\sigma_k$s are 
found by diagonalizing the quadratic part of the denominator via unitaries.
The main step in the normalization is solving $\nabla \Lambda = 0$,
and hence the normalization is computable provided we can solve for
roots of the polynomials involved.

If $0 < \sigma_1 < \sigma_2 < \cdots < \sigma_n$, then
the only possible $V$ such that $G_2 \circ V = G_2$ is the diagonal
matrix with $1$s and $-1$s on the diagonal.
That is, any $z_j$ can be replaced by $-z_j$, and the
set of such $V$ forms
a finite subgroup of $U(n)$ of order $2^n$.
We can also put $P$ into a normal form.
Fix an ordering on the $n$-variable multiindices $\alpha$
and write $P(z) = \sum_{\alpha} c_{\alpha} z^\alpha$, where $c_{\alpha} \in \C^N$.
Choose the unitary matrix $U$ so that the matrix $[c_\alpha]$,
whose columns are $c_\alpha$ ordered as given, is in row echelon form with
positive pivots.
If $N$ is the embedding dimension for $f$, then $[c_\alpha]$ has full
rank and $U$ is unique.
That is, when the $\sigma_k$s are distinct and nonzero,
we have a normal form up to a small finite subgroup of the unitary
group.

When $\sigma_1 = \cdots = \sigma_n=0$ and $G \equiv 1$, the map is
polynomial and takes $0$ to $0$.
D'Angelo proved \cite{DAngelo:book} that if two polynomial maps that fix
the origin are equivalent, then they are equivalent up to unitaries.
Therefore, Theorem~\ref{thm:normdenom} extends D'Angelo's observation to rational maps.
There do exist degree 3 maps not spherically equivalent to a polynomial,
for example, a product of three Blaschke factors is in general not equivalent to
a polynomial proper map of discs, that is, to $z^3$.
For an example from $\bB_2$ to $\bB_4$ see
also Faran--Huang--Ji--Zhang~\cite{FHJZ}.
In section~\ref{section:poly}, we prove that if the 
embedding dimension of the map is maximal for its degree, then there exists
a linear fractional $\tau$ such that $\tau \circ f$ is a polynomial
proper map to the ball $\bB_N$, the generalized ball $\bB_{1,N-1}$, or
the Heisenberg realization of the ball $\bH_N$.  So $f$ is equivalent
to a polynomial if we take possibly a different representation of the
ball and do not assume that $f$ takes the origin to the origin.

When considering the normal form,
it is better to dispense with the target automorphism
group via Lemma~\ref{lemma:targetautform} and consider the normal form for
the polynomials $r(z,\bar{z}) = \sabs{g(z)}^2-\snorm{f(z)}^2$.  For instance,
when $0 < \sigma_1 < \cdots < \sigma_n$, if we put the map, and
therefore $r$, into the form from the theorem, then $r$ is normalized up to
swapping the signs of the $z_k$s.

When $d < 3$, then $G$ must be of degree 1 or less, that is, $\sigma_1=\cdots=\sigma_n=0$ and $G\equiv 1$.
In other words, the theorem reproves the result that all degree 2
proper rational maps of balls are spherically equivalent to a polynomial map
taking the origin to the origin.

If degree $d=3$, then the denominator is normalized to
\begin{equation}
G(z) = 1 + \sum_{k=1}^n \sigma_k z_k^2.
\end{equation}
In this case the denominator is completely in normal form:
If $F=\frac{P}{G}$ and $\Phi=\frac{\Pi}{\Gamma}$
are spherically equivalent and in the form above, then $G=\Gamma$
and $\Pi = U \circ P \circ V$, where $U$ and $V$ are unitary matrices
and $G \circ V = G$.

As an example, let us study the case $n=N=1$ and $d=3$.  That is,
we find the normal form for proper rational maps
$f \colon \bB_1 \to \bB_1$ of degree 3.
A proper map of discs is a finite Blaschke product,
in this case a product of 3 factors.
As we require $f(0) =0$, one of these factors is $z$.  We also
require the denominator to be $1+ \sigma z^2$ ($\sigma \geq 0$ and $\sigma < 1$
so that the denominator does not vanish on the ball).
Thus the other two factors are the Blachke factors
taking zero to $\pm i\sqrt{\sigma} \in \bB_1$.
So the normal form is
\begin{equation} \label{eq:deg3n1normform}
f(z) =
z \left(
\frac{i\sqrt{\sigma}-z}{1-i\sqrt{\sigma}\, z}
\right)
\left(
\frac{-i\sqrt{\sigma}-z}{1+i\sqrt{\sigma}\, z}
\right)
=
\frac{\sigma z + z^3}{1+ \sigma z^2} ,
\qquad
0 \leq \sigma < 1 .
\end{equation}
If $\sigma = 0$, the map is simply $z^3$.
The map is in normal form as we said above and switching the sign of $z$
(the only unitary allowed on the source if $\sigma > 0$)
can be counteracted by a unitary on the target disc.
Thus \eqref{eq:deg3n1normform} is a complete normal form of third degree proper maps
of the disc up to spherical equivalence. 
The corresponding exhaustion function $\Lambda$ is
\begin{equation}
\Lambda(z,\bar{z})
=
\frac{
\sabs{1+\sigma z^2}^2-\sabs{\sigma z + z^3}^2
}{
(1-\sabs{z}^2)^3
} .
\end{equation}
This $\Lambda$ has a unique critical point (a minimum) at the
origin.
Moreover, the only degree 3 polynomial proper map of $\bB_1$ to $\bB_1$
is $e^{i\theta} z^3$.  Hence, the only map \eqref{eq:deg3n1normform}
spherically equivalent to a polynomial is the one where $\sigma = 0$.

As we mentioned above, D'Angelo observed that
any polynomial not zero on the closed ball
is the denominator of a rational proper map written in lowest terms.
By tensoring with the identity
we ensure that the map takes the origin to the origin.
That is, for any
denominator in normal form $G(z) = 1 + \sum_{k=1}^n \sigma_k z_k^2 + E(z)$ that is not
zero on the closed ball, a numerator exists giving a proper rational map of
balls (in lowest terms) taking the origin to the origin, hence one in normal form.
The degree of the numerator depends on the $\sigma_k$ and $E$.
D'Angelo (see Chapter 2 section 14 of \cite{DAngelo:spheresbook})
has worked out the following rather interesting example\footnote{D'Angelo
uses $1-\lambda z_1 z_2$, which we put into our normal form.}:
Given $\sigma \geq 0$, a polynomial in normal form
\begin{equation}
G(z) = 1 + \sigma z_1^2 + \sigma z_2^2
\end{equation}
is nonzero on the closed
unit ball whenever $\sigma < 1$, and consequently 
it is the denominator of a proper rational map
$\bB_2$ to $\bB_N$.  However, the degree $d$ of this map necessarily goes to
infinity as $\sigma$ approaches $1$.  A third degree map exists if $\sigma <
\frac{\sqrt{3}}{2}$.
A natural question given our normal form is, therefore, which possible denominators of
degree 2 in normal form,
$G(z) = 1 + \sum_{k=1}^n \sigma_k z_k^2$,
are denominators of third degree maps taking the origin to the origin.
D'Angelo~\cite{DAngelo:preprint}
recently made a systematic study of how the denominator is
determined by the numerator, and in particular gives a method
for constructing the precise equations needed, working out
the degree 3 case in detail.
For our purposes, we content ourselves with proving in section~\ref{section:exist},
that a degree 3 numerator taking the origin to the origin
corresponding to the denominator (in lowest terms)
exists for all small enough $\sigma_k$.

Finally, we remark that it is common to switch back and forth
between rational proper maps of balls $\bB_n \to \bB_N$ and rational maps
of spheres $S^{2n-1} \to S^{2N-1}$.  When $n > 1$, the only difference
between these two setups is the existence of a constant sphere map not
corresponding to a proper map of balls.  If $n=1$, then there are
many sphere maps that are not proper maps of balls.  For example,
$\frac{1}{z}$ takes the circle to the circle, but not the unit disc to the
unit disc.  The results of this paper hold for any $n$ as long as they are
stated for proper maps of balls.  They hold for rational proper maps of spheres
if $n > 1$.

The author would like to thank John D'Angelo, Dusty Grundmeier,
and Han Peters for many discussions on this and related topics, which led to 
the present result, Abdullah Al Helal for reading the manuscript and
asking insightful questions, and the
referee for careful reading of the proofs and suggesting where further
detail could be given.


\section{Preliminaries on proper maps of balls}

When 
$\frac{p}{g} \colon \bB_n \to \bB_N$ is a proper rational map written
in lowest terms, we consider
the real polynomial $\sabs{g(z)}^2-\snorm{p(z)}^2$.
We will call this real polynomial the \emph{underlying form} of
$\frac{p}{g}$.
Note that normally
$\snorm{p(z)}^2-\sabs{g(z)}^2$ is considered in the literature,
but its negative will be more convenient for us here.
As $f$ maps to the ball, the underlying form is positive on $\bB_n$,
and as $f$ is proper it is zero on the sphere.  That is,
\begin{equation}
\sabs{g(z)}^2- \snorm{p(z)}^2 = 0 \quad \text{on} \quad \snorm{z}^2 = 1 ,
\end{equation}
or in other words, there exists a polynomial $q(z,\bar{z})$ such that
\begin{equation}
\sabs{g(z)}^2- \snorm{p(z)}^2 = q(z,\bar{z}) \bigl( 1-\snorm{z}^2 \bigr) .
\end{equation}

The techniques in this paper are based on the following well-known idea.
A real polynomial $r$ can be written as
\begin{equation}
r(z,\bar{z})=
\sum_{\alpha,\beta} c_{\alpha,\beta} z^\alpha \bar{z}^\beta
=\snorm{h(z)}^2
\end{equation}
for a holomorphic polynomial
$h \colon \C^n \to \C^N$ if and only if the matrix of coefficients
$[c_{\alpha,\beta}]$ is
positive semidefinite, and the
rank of $[c_{\alpha,\beta}]$ is the
least $N$ necessary.
More generally, if $[c_{\alpha,\beta}]$ has $a$ positive and $b$ negative
eigenvalues, then $r$ can be written as a difference
$\snorm{h(z)}^2 - \snorm{u(z)}^2$, where $h$ has $a$ components and $u$ has
$b$ components.  This fundamental decomposition follows from elementary linear algebra.
We write the matrix of coefficients as a sum of rank one matrices,
each of which is $\pm$ an outer product of a vector with itself.
See~\cites{HornJohnson}.

Suppose $\frac{p}{g} \colon \bB_n \to \bB_N$ is a proper rational map, and
suppose that the components of $p$ together with $g$ are linearly independent, that is,
the image of $\frac{p}{g}$ is not contained in a complex hyperplane.
If it were, a linear fractional automorphism of the ball can move this
hyperplane to the hyperplane $\{ z_N = 0 \}$.  To make a long story short,
after composing with an automorphism, we may remove any
zero components and we can assume that $(p_1,\ldots,p_N,g)$ are linearly
independent.
This minimal $N$ is called the \emph{embedding dimension} for
the map.
We also remark that fixing $d$ we find that the maximal possible embedding
dimension for a degree $d$ map is one less (to account for $g$) than
the dimension of the space of polynomials of degree $d$.
If $N$ is the
embedding dimension of the rational proper map $\frac{p}{g}$,
then the matrix of coefficients of
the underlying form
$\sabs{g(z)}^2 - \snorm{p(z)}^2$ has
$1$ positive and $N$ negative eigenvalues.

The underlying form 
$\sabs{g(z)}^2 - \snorm{p(z)}^2$ can be rescaled by a real positive constant
without changing the map $\frac{p}{g}$ as such rescaling can be done by
rescaling $p$ and $g$ by that same constant.
The value $\sabs{g(0)}^2-\snorm{p(0)}^2$ is some positive quantity,
therefore, by such a rescaling we may assume
$\sabs{g(0)}^2-\snorm{p(0)}^2 = 1$.
Once we normalize the polynomial in this way, we ask what does it mean
if two proper maps have the same underlying form.
In \cite{Lebl:normal}, the author proved that the underlying forms are equal
(with the value at zero normalized as above) if and only if 
the maps themselves differ by a target automorphism.

\begin{lemma} \label{lemma:targetautform}
Suppose
$\frac{p}{g} \colon \bB_n \to \bB_N$ 
and $\frac{P}{G} \colon \bB_n \to \bB_N$ are proper rational maps
written in lowest terms such that
$\sabs{g(0)}^2-\snorm{p(0)}^2 = 1$ and
$\sabs{G(0)}^2-\snorm{P(0)}^2 = 1$.
Then there exists a $\tau \in \Aut(\bB_N)$ such that
\begin{equation}
\tau \circ \frac{p}{g} = \frac{P}{G}
\qquad
\text{if and only if}
\qquad
\sabs{g(z)}^2-\snorm{p(z)}^2 = 
\sabs{G(z)}^2-\snorm{P(z)}^2 .
\end{equation}
\end{lemma}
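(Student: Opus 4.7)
The strategy is to realize $\Aut(\bB_N)$ as the projectivization of the indefinite unitary group $U(1,N)$ of $\C^{N+1}$ preserving the Hermitian form $Q(w_0,w_1,\ldots,w_N) = \sabs{w_0}^2 - \sum_{j=1}^N \sabs{w_j}^2$, and to view $(g,p_1,\ldots,p_N)^T$ as a homogeneous lift of $p/g \colon \bB_n \to \bB_N \subset \bP^N$. Then the underlying form of the map is precisely $Q$ applied to this lift, and a target automorphism is induced by a linear action preserving $Q$.

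For the forward implication, assume $\tau \circ (p/g) = P/G$ with $\tau \in \Aut(\bB_N)$. Lift $\tau$ to some $M \in U(1,N)$ and set $(g', p_1', \ldots, p_N')^T = M(g, p_1, \ldots, p_N)^T$, so that $p'/g' = \tau \circ (p/g) = P/G$ as rational functions. Since $M$ preserves $Q$, we obtain $\sabs{g'}^2 - \snorm{p'}^2 = \sabs{g}^2 - \snorm{p}^2$. Because $P/G$ is in lowest terms, cross-multiplication yields $g' = hG$ and $p_j' = hP_j$ for some polynomial $h$. Moreover, $(g, p_1, \ldots, p_N)^T = M^{-1}(g', p')^T$, so any polynomial dividing every component of $(g', p')$ also divides every component of $(g, p_1, \ldots, p_N)$. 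The lowest-terms hypothesis on $p/g$ then forces $h$ to be a constant, and comparing values at the origin under the two normalizations yields $\sabs{h}^2 = 1$; equality of the underlying forms follows.

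For the converse, set $r(z,\bar z) = \sabs{g}^2 - \snorm{p}^2 = \sabs{G}^2 - \snorm{P}^2$. As recalled in the preliminaries, any decomposition of $r$ as a signed sum of squared moduli of holomorphic polynomials corresponds to a factorization of the Hermitian coefficient matrix $[c_{\alpha,\beta}]$, and any two minimal-length decompositions are related by a matrix in the pseudo-unitary group matching the signature of $[c_{\alpha,\beta}]$. Here both $(g, p_1, \ldots, p_N)$ and $(G, P_1, \ldots, P_N)$ furnish decompositions of signature $(1, N')$, where $N'$ is the common embedding dimension (which is the same for both maps because it is determined by $r$ alone). Extending trivially on the orthogonal complement if $N' < N$, we obtain $M \in U(1, N)$ with $M(g, p_1, \ldots, p_N)^T = (G, P_1, \ldots, P_N)^T$; its projectivization $\tau = [M]$ then lies in $\Aut(\bB_N)$ and satisfies $\tau \circ (p/g) = P/G$.

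The main obstacle is the essential uniqueness of the signed decomposition of $r$: given two representations $r = \snorm{h}^2 - \snorm{u}^2 = \snorm{H}^2 - \snorm{U}^2$ whose total number of components equals the rank of $[c_{\alpha,\beta}]$, both factorizations have column span equal to the range of $[c_{\alpha,\beta}]$, so they differ by an invertible change of basis that must preserve the diagonal form $\mathrm{diag}(1, -1, \ldots, -1)$ and hence lie in the appropriate pseudo-unitary group. This argument together with the common-factor reasoning in the forward direction are the only nontrivial steps; both are elementary linear algebra once the projective viewpoint is set up.
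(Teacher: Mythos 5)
The paper does not actually prove this lemma; it states it and cites \cite{Lebl:normal} (see also \cite{DX}). Your overall strategy --- viewing $(g,p)$ as a lift to $\C^{N+1}$, realizing $\Aut(\bB_N)$ as the projectivization of $U(1,N)$, and interpreting the underlying form as a $U(1,N)$-invariant --- is the same Hermitian-form framework that the cited reference uses, and your forward direction is essentially complete: the cross-multiplication argument forces $(g',p') = h(G,P)$, the $\gcd$ condition on $(g,p)$ forces $h$ to be a unit, and the normalization at $0$ gives $\sabs{h}=1$. That part is sound.

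The converse direction, however, has a genuine gap precisely in the case $N' < N$, where $N'$ is the common embedding dimension. Your ``main obstacle'' paragraph handles only the full-rank situation: when the coefficient matrices $V$ (of $(g,p)$) and $W$ (of $(G,P)$) have full column rank $N+1$, the equation $VJV^* = WJW^*$ together with $W=VT$ does force $TJT^*=J$. But when $N'<N$, the matrix $V$ has rank $N'+1 < N+1$, so $V$ has no left inverse, and $V(J-TJT^*)V^*=0$ does not imply $TJT^*=J$; the claim ``they differ by an invertible change of basis that must preserve the diagonal form'' is exactly what needs to be proved and is not automatic. Your fix, ``extending trivially on the orthogonal complement,'' does not work as stated: the two $J$-nondegenerate subspaces $\operatorname{Im}(V^*)$ and $\operatorname{Im}(W^*)$ of $(\C^{N+1},J)$ on which one wants to match the maps generally have \emph{different} $J$-orthogonal complements, so there is no ``trivial'' extension by the identity. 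One must instead note that both complements are negative definite of dimension $N-N'$, choose an isometry between them, and then patch --- this is precisely Witt's extension theorem for the nondegenerate Hermitian form $J$. Two smaller points should also be made explicit: (a) that $\operatorname{rank}(V)=\operatorname{rank}(W)=\operatorname{rank}(C)$, which is the content of the embedding-dimension normalization in the preliminaries and is what justifies ``both factorizations have column span equal to the range of $[c_{\alpha,\beta}]$''; and (b) that the resulting $M\in U(1,N)$ projectivizes to an element of $\Aut(\bB_N)$, which uses that $U(1,N)$ preserves the positive cone $\{J>0\}$. With Witt's theorem invoked and these points spelled out, the argument closes; as written, the degenerate case is asserted rather than proved.
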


The lemma holds in more generality, the target can be an arbitrary
hyperquadric, see \cite{Lebl:normal} or \cite{DX}.
The power of this lemma is that it reduces the problem of
classification of proper maps up to pre and post composition by automorphisms
to classification of the underlying forms up to precomposition with an
automorphism.  We have eliminated the target automorphism group from the
problem.

Since $\Aut(\bB_N)$ is transitive, for
any proper ball map $\frac{p}{g}$, there exists
a $\tau \in \bB_N$ such that 
$\tau \circ \frac{p}{g} = \frac{P}{G}$, where $\frac{P}{G}$
is a proper ball map with $P(0) = 0$, $G(0) = 1$.
By Lemma~\ref{lemma:targetautform},
$\abs{g(z)}^2-\snorm{p(z)}^2 = 
\abs{G(z)}^2-\snorm{P(z)}^2$ (as long as
$\abs{g(0)}^2-\snorm{p(0)}^2 = 1$).
Instead of looking for $\tau$, we simply read off the $G$
by looking at the coefficients of the
polynomial $r(z,\bar{z}) = \sabs{g(z)}^2-\snorm{p(z)}^2$.

\begin{lemma} \label{lemma:Gfrommatrix}
Suppose
$\frac{p}{g} \colon \bB_n \to \bB_N$ 
is a proper rational map of degree $d$
written in lowest terms such that $p(0) = 0$ and $g(0)=1$.
Let $r(z,\bar{z}) = \sabs{g(z)}^2-\snorm{p(z)}^2$
and
$r(z,\bar{z}) = q(z,\bar{z}) \bigl(1-\snorm{z}^2\bigr)$.
Then $g$ is of degree $d-1$ or less, and
$g(z) = r(z,0) = q(z,0)$.
\end{lemma}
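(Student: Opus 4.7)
My plan is to split the lemma into two claims---the identification $g(z) = r(z,0) = q(z,0)$ and the degree bound $\deg g \le d-1$---and handle them in turn. The first falls out of a direct substitution $\bar{z}=0$. I would set $\bar{z} = 0$ in each side of the two identities for $r$; treating $z$ and $\bar{z}$ as formally independent indeterminates, the polynomials $\overline{g(z)}$ and $\overline{p_i(z)}$ are obtained by conjugating coefficients, so $\overline{g(z)}\big|_{\bar{z}=0} = \overline{g(0)} = 1$ and $\overline{p_i(z)}\big|_{\bar{z}=0} = \overline{p_i(0)} = 0$ using the normalizations $g(0)=1$ and $p(0)=0$. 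This gives $r(z,0) = g(z)\cdot 1 - \sum_i p_i(z)\cdot 0 = g(z)$. Substituting $\bar{z}=0$ into $r = q(1-\snorm{z}^2)$ yields $r(z,0) = q(z,0)$, and combining the two gives $g(z) = r(z,0) = q(z,0)$.

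For the degree bound, the plan is to show $\deg_z q \le d-1$; since $g(z) = q(z,0)$, this immediately forces $\deg g \le d-1$. I would decompose $q$ by homogeneous $z$-degree, writing $q = q^{[0]} + q^{[1]} + \cdots + q^{[m]}$, where $q^{[k]}$ is homogeneous of degree $k$ in $z$ with coefficients in $\C[\bar{z}]$ and $m = \deg_z q$ with $q^{[m]} \ne 0$. Multiplication by $\snorm{z}^2 = \sum_i z_i \bar{z}_i$ shifts the $z$-degree up by exactly one, so the top $z$-homogeneous piece of $r = q - q\snorm{z}^2$ is $-q^{[m]}\snorm{z}^2$, which is nonzero because $\snorm{z}^2$ is not a zero divisor in $\C[z,\bar{z}]$. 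Hence $\deg_z r = m + 1 = \deg_z q + 1$. On the other hand, $\deg_z r \le \max(\deg_z \sabs{g}^2, \deg_z \snorm{p}^2) = \max(\deg g, \deg p) \le d$, forcing $\deg_z q \le d-1$ and thus $\deg g \le d-1$.

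The main (and essentially only) obstacle is the degree bookkeeping: one must carefully separate the $z$-degree from the $\bar{z}$-degree and verify that the leading $z$-homogeneous piece of $-q\snorm{z}^2$ cannot be cancelled by any part of $q$ itself (whose $z$-degree is only $m$) nor destroyed by internal cancellation in $\snorm{z}^2$. Once the notion of top $z$-homogeneous piece is in place, the rest is elementary polynomial arithmetic using only the hypotheses $p(0)=0$, $g(0)=1$, and the existence of the quotient polynomial $q$ provided by properness.
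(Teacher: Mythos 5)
Your proof of the identity $g(z) = r(z,0) = q(z,0)$ is exactly the paper's argument: treat $z$ and $\bar z$ as independent indeterminates (``complexify''), set $\bar z = 0$, and use $g(0)=1$, $p(0)=0$. For the degree bound $\deg g \le d-1$, however, you give a direct proof, whereas the paper simply cites this as a known observation of D'Angelo (stated earlier in the introduction, with reference \cite{DAngelo:book}). Your argument is correct and self-contained: decomposing $q$ by $z$-homogeneous degree, noting that multiplication by $\snorm{z}^2$ raises $z$-degree by exactly one and that $\snorm{z}^2$ is not a zero divisor in the polynomial ring $\C[z,\bar z]$, you get $\deg_z r = \deg_z q + 1$, while $\deg_z r \le \max(\deg g, \deg p) = d$ from $r = \sabs{g}^2 - \snorm{p}^2$; combined with $g(z)=q(z,0)$ this yields $\deg g \le \deg_z q \le d-1$. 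This trades the external citation for a few lines of elementary degree bookkeeping and makes the lemma stand alone, at the cost of repeating (a special case of) an argument that already appears in the literature. Both are fine; the paper's choice is purely a matter of exposition economy.
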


In particular, the coefficients of the denominator $g$ for the $p$ that takes the
origin to the origin are given by the row or column of the
coefficient matrix of $r$ or $q$ corresponding to pure holomorphic or
antiholomorphic terms.
So we can read off~$g$ from the coefficient matrix of~$r$ or~$q$.
Another way to think about it is
that the pure terms of $r$ equal
$g(z) + \overline{g(z)} -1$.

\begin{proof}
That $g$ is of degree at most $d-1$ is the result of D'Angelo we mentioned
above.
That $g(z) = r(z,0) = q(z,0)$ follows by complexifying the
$\sabs{g(z)}^2-\snorm{p(z)}^2$ and plugging in $\bar{z}=0$ using the fact
that $p(0)=0$ and $g(0)=1$.
\end{proof}

Every automorphism of the unit ball $\bB_n$ is written as $U
\varphi_\alpha$,
where $U$ is a unitary matrix and
\begin{equation} \label{eq:varphia}
\varphi_{\alpha}(z) =
\frac{{\alpha}-L_{\alpha} z}{1-\langle z,{\alpha}\rangle},
\qquad
L_{\alpha}z = \left(1-\sqrt{1-\snorm{{\alpha}}^2}\right)
\frac{\langle z,{\alpha}\rangle}{\snorm{{\alpha}}^2} {\alpha} 
+
\sqrt{1-\snorm{{\alpha}}^2} z,
\end{equation}
where if ${\alpha}=0$, then $L_0=I$.
Note that $L_\alpha$ is a linear map.
The automorphism $\varphi_\alpha$ is the one where
$\varphi_\alpha(0) = \alpha$ and $\varphi_\alpha(\alpha) =  0$.  In fact,
$\varphi_\alpha$ is an involution.
Note that the coefficients of the numerator and denominator of the
map $\varphi_\alpha$ are continuous in ${\alpha} \in \bB^n$.

%
%
%

We need a version of a result of
Cima--Suffridge~\cite{CimaSuffridge} (or Chiappari~\cite{Chiappari}) that the
denominator does not vanish, although they consider mainly $n \geq 2$.
The one dimensional case is rather simple, so for completeness, we give a proof.
See also~\cite{DHX}.

\begin{lemma} \label{lemma:chiappari}
Suppose
$f = \frac{p}{g} \colon \bB_n \to \bB_N$ 
is a proper rational map written in lowest terms.
Then $g$ never vanishes on $\partial \bB_n$.
\end{lemma}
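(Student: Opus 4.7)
The plan is to split the statement into the cases $n \geq 2$ and $n = 1$. For $n \geq 2$ I would simply cite the theorems of Cima--Suffridge~\cite{CimaSuffridge} and Chiappari~\cite{Chiappari}, which cover exactly this case, so the substantive task is the clean one-variable proof that the preceding text promises to include for completeness.

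For $n = 1$ my plan is a two-step contradiction argument. Suppose $g(z_0) = 0$ for some $z_0 \in \partial \bB_1$, that is, $\sabs{z_0} = 1$. First I would show that $p(z_0) = 0 \in \C^N$: this is immediate from the underlying-form identity $\sabs{g(z)}^2 - \snorm{p(z)}^2 = q(z,\bar z)\bigl(1 - \sabs{z}^2\bigr)$ introduced at the start of the section. Evaluating at $z_0$ gives $-\snorm{p(z_0)}^2 = 0$, so every component $p_j(z_0)$ is zero. Second, because we are in one complex variable, polynomials factor completely over $\C$: the linear factor $(z - z_0)$ divides $g$ and each $p_j$. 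Hence $(z - z_0)$ is a nontrivial common divisor of $g, p_1, \ldots, p_N$, contradicting the assumption that $p/g$ is written in lowest terms.

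The main obstacle is precisely what is absent in one variable and present in several: in $n \geq 2$ variables, a common zero of polynomials need not force a common polynomial factor, so the one-line gcd argument collapses. Overcoming this is the content of the cited works, where one argues instead with individual irreducible factors $h$ of $g$, combining Nullstellensatz-type reasoning on the irreducible hypersurface $V(h)$ with the strong pseudoconvexity of $\partial \bB_n$ and the boundary regularity of rational proper maps to force $h$ to divide every $p_j$. Because the lemma is stated only up to quoting those references for $n \geq 2$, the genuinely hard obstacle is circumvented by citation rather than confronted here.
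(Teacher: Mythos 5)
Your proof is correct but takes a different route from the paper's for the $n=1$ case. The paper argues directly with the map: since $p/g$ is in lowest terms and $g(z_0)=0$, at least one component $p_j(z_0)\neq 0$, so $f_j=p_j/g$ would have a pole at $z_0$, contradicting $\sabs{f_j}<1$ on $\bB_1$. You instead read off from the underlying-form identity that $p(z_0)=0$, so $(z-z_0)$ is a common linear factor of $g$ and every $p_j$, contradicting the lowest-terms hypothesis. The two arguments are essentially contrapositives of one another — the paper derives a contradiction to boundedness of $f$, you derive a contradiction to the lowest-terms assumption — and are equally short. The paper's version is slightly more self-contained, using only $\snorm{f}<1$ on the disc; yours deliberately routes through the divisibility of $\sabs{g}^2-\snorm{p}^2$ by $1-\sabs{z}^2$ set up at the start of the section, which fits the paper's general framework. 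One point worth noting: the existence of the polynomial $q$ (equivalently, that $\sabs{g}^2-\snorm{p}^2$ vanishes on all of $S^1$, including at potential zeros of $g$) itself rests on a continuity-and-density argument from properness; this does not make your deduction circular, since establishing that identity does not presuppose nonvanishing of $g$ on the circle, but it is an implicit step to be aware of. Your explanation of why the one-variable gcd argument collapses for $n\geq 2$ and must be replaced by the cited boundary-regularity results is accurate.
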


\begin{proof}
When $n > 1$, it is the result of
Cima--Suffridge~\cite{CimaSuffridge}, so suppose $n=1$.
If $g(z_0)=0$ for some $z_0 \in \partial \bB_1 = S^1$,
then, as $f$ is in lowest terms, at least one component 
of $f$ has a pole at~$z_0$.  But
$\snorm{f(z)} < 1$ for all $z \in \bB_1 = \D$.  So for any
component $f_j(z)$ we also have $\sabs{f_j(z)} < 1$, so $f_j$
cannot have a pole on the closure of $\D$.
\end{proof}

The lemma has the following stronger corollary about the quotient
polynomial, which is the result that we will require.

\begin{lemma} \label{lemma:qpositive}
Suppose
$\frac{p}{g} \colon \bB_n \to \bB_N$ 
is a proper rational map written in lowest terms.
Let $q$ be the quotient polynomial $q(z,\bar{z}) =
\frac{\sabs{g(z)}^2-\snorm{p(z)}^2}{1-\snorm{z}^2}$ as above.
Then $q > 0$ on the sphere $S^{2n-1}$,
and hence on $\overline{\bB_n}$.
\end{lemma}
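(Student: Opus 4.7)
The key observation is that $q$ is already strictly positive on the open ball: both $\sabs{g(z)}^2 - \snorm{p(z)}^2$ and $1-\snorm{z}^2$ are positive on $\bB_n$, so the defining identity forces $q(z,\bar z) > 0$ for every $z \in \bB_n$. By continuity this gives $q \geq 0$ on $\overline{\bB_n}$, so the whole content of the lemma is the strict positivity of $q$ on $S^{2n-1}$.

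First I would use Lemma~\ref{lemma:chiappari}: since $g$ has no zeros on $\overline{\bB_n}$, the rational map $f = p/g$ and the real analytic function $\phi(z) = 1 - \snorm{f(z)}^2$ extend smoothly to a neighborhood of $\overline{\bB_n}$. By properness, $\phi > 0$ on $\bB_n$ and $\phi = 0$ on $\partial \bB_n$. Rewriting $\phi \cdot \sabs{g}^2 = q(z,\bar z)(1-\snorm{z}^2)$ and applying the product rule at any $z_0 \in \partial \bB_n$, using $\phi(z_0) = 0$ and $1 - \snorm{z_0}^2 = 0$, gives
\begin{equation*}
d\phi|_{z_0} \, \sabs{g(z_0)}^2 = -\, q(z_0,\bar z_0)\, d(\snorm{z}^2)|_{z_0}.
\end{equation*}
Since the outward unit normal at $z_0 \in S^{2n-1}$ is $z_0$ itself, this says $\partial \phi/\partial \nu(z_0) = -2 q(z_0,\bar z_0)/\sabs{g(z_0)}^2$, so positivity of $q$ on the sphere is equivalent to $\partial \phi/\partial \nu(z_0) < 0$.

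The main step, which is the only place properness enters beyond what is already encoded in $\phi = 0$ on the boundary, is proving that this outward normal derivative is strictly negative, and this I would do via Hopf's boundary point lemma applied to $-\phi = \snorm{f}^2 - 1$. This function is a sum of squared moduli of holomorphic functions, hence plurisubharmonic, hence subharmonic on $\bB_n \subset \bR^{2n}$; it attains its maximum value $0$ on $\partial \bB_n$; and it is strictly less than $0$ at every interior point because $\snorm{f(z)} < 1$ there. The ball has smooth strongly convex boundary, so Hopf's lemma applies and yields $\partial(-\phi)/\partial \nu(z_0) > 0$, i.e., $\partial \phi/\partial \nu(z_0) < 0$.

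Combining the two displays gives $q(z_0,\bar z_0) > 0$ for every $z_0 \in \partial \bB_n$, and together with the elementary interior positivity this yields $q > 0$ on all of $\overline{\bB_n}$. I do not anticipate any obstacle: the Hopf lemma hypotheses are verified automatically (no need to rule out $-\phi$ being constant, since $-\phi < 0$ strictly on $\bB_n$), and all differentiations are legitimate because Lemma~\ref{lemma:chiappari} lets us work in a neighborhood of $\overline{\bB_n}$ where $g$ does not vanish.
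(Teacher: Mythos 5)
Your proof is correct and follows essentially the same route as the paper: both invoke Lemma~\ref{lemma:chiappari} to justify working in a neighborhood of $\overline{\bB_n}$, both differentiate the identity $(1-\snorm{f}^2)\sabs{g}^2 = q\,(1-\snorm{z}^2)$ at a boundary point (exploiting that both factors on either side vanish there), and both apply Hopf's lemma to the plurisubharmonic, hence subharmonic, function $\snorm{f}^2$ to get strict positivity of the resulting boundary derivative. You spell out the product-rule bookkeeping and the normal-derivative identification more explicitly than the paper does, but the argument is the same.
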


It is not difficult to see that $q > 0$ on the ball $\bB_n$:
After all, $\frac{p}{g}$ takes $\bB_n$ to $\bB_n$ and so
$\sabs{g(z)}^2-\snorm{p(z)}^2 > 0$ and $1-\snorm{z}^2 > 0$ 
for $z \in \bB_n$.
The point of the lemma is that $q$ does not vanish on the boundary.

\begin{proof}
Write $f = \frac{p}{g}$.  By Lemma~\ref{lemma:chiappari},
the function $f$ is holomorphic on a neighborhood of $\overline{\bB_n}$.
In particular, the denominator is nonzero in a neighborhood of
$\overline{\bB_n}$.  Thus we write
\begin{equation}
1-\snorm{f(z)}^2 = \frac{q(z,\bar{z})}{\sabs{g(z)}^2}
\bigl( 1-\snorm{z}^2 \bigr) .
\end{equation}
Taking the radial derivative $\frac{\partial}{\partial r}$ for a point $z_0$
on the sphere we find
\begin{equation}
\left.\frac{\partial}{\partial r}\right|_{z=z_0}\Bigl[\snorm{f(z)}^2\Bigr] =
2 \frac{q(z_0,\bar{z}_0)}{\sabs{g(z_0)}^2} .
\end{equation}
The function
$\snorm{f(z)}^2$ is plurisubharmonic, and hence by the Hopf lemma,
its radial derivative on the sphere must be positive, and hence
$q(z_0,\bar{z}_0) > 0$.
\end{proof}


\section{The $\Lambda$-function}

For a proper ball map $f = \frac{p}{g}$ written in lowest terms,
we define the \emph{$\Lambda$-function corresponding to $f$} as before:
\begin{equation} \label{eq:Lambda2}
\Lambda(z,\bar{z}) = \Lambda_f(z,\bar{z}) =
\frac{\sabs{g(z)}^2-\snorm{p(z)}^2}{(1-\snorm{z}^2)^d} .
\end{equation}
The $\Lambda$ does not change at all when postcomposing $f$ with an automorphism.

\begin{lemma} \label{lemma:Lambda1}
Suppose $f \colon \bB_n \to \bB_N$ is a rational proper map and
$\tau \in \Aut(\bB_N)$.
Then $\Lambda_f = \Lambda_{\tau \circ f}$.
\end{lemma}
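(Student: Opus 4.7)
The plan is to combine Lemma~\ref{lemma:targetautform} with a short degree-preservation argument. First I would fix the implicit normalization in the $\Lambda$ formula: rescale the lowest-terms representation $f = p/g$ by a common constant so that $\sabs{g(0)}^2 - \snorm{p(0)}^2 = 1$, and likewise write $\tau \circ f = P/G$ in lowest terms with $\sabs{G(0)}^2 - \snorm{P(0)}^2 = 1$. The ``only if'' direction of Lemma~\ref{lemma:targetautform}, with $\tau$ as the witnessing target automorphism, immediately yields the polynomial identity
\[
\sabs{g(z)}^2 - \snorm{p(z)}^2 = \sabs{G(z)}^2 - \snorm{P(z)}^2,
\]
matching the numerators of $\Lambda_f$ and $\Lambda_{\tau \circ f}$.

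Next I would verify that the denominators $(1-\snorm{z}^2)^d$ agree, i.e.\ that $\deg(\tau \circ f) = \deg f$. Writing $\tau = U\varphi_a$ via \eqref{eq:varphia}, the substitution $\tau(f(z)) = U\varphi_a\bigl(p(z)/g(z)\bigr)$ expresses $\tau \circ f$ as a ratio whose (possibly unreduced) numerator and denominator are $\C$-linear combinations of the polynomials $p$ and $g$, hence of degree at most $d$; the unitary factor $U$ does not change degree. Applying the same argument to $\tau^{-1} \in \Aut(\bB_N)$ acting on $\tau \circ f$ recovers $f$ and gives the reverse inequality, so the two degrees coincide. Combined with the preceding identity, this gives $\Lambda_f = \Lambda_{\tau \circ f}$.

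The main subtlety is making the normalization convention explicit: without fixing a scale for the lowest-terms representation, $\Lambda_f$ is only well-defined up to a positive real constant, so equality on the nose genuinely requires matching the value at $0$ for both maps. An alternative, more computational route would bypass Lemma~\ref{lemma:targetautform} by directly using the standard ball automorphism identity $1 - \snorm{\varphi_a(w)}^2 = (1-\snorm{a}^2)(1-\snorm{w}^2)/\sabs{1 - \langle w, a\rangle}^2$; substituting $w = f(z)$ and clearing denominators produces $\sabs{\tilde g}^2 - \snorm{\tilde p}^2 = (1-\snorm{a}^2)\bigl(\sabs{g}^2 - \snorm{p}^2\bigr)$ for the unreduced $\tilde p = ag - L_a p$ and $\tilde g = g - \langle p, a\rangle$ read off from \eqref{eq:varphia}. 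After reducing to lowest terms, the degree-preservation argument forces the cancelled common factor to be a constant, and the value-at-zero normalization absorbs the scaling factor $1 - \snorm{a}^2$ to give exact equality. I would probably present the direct computation, since it both exhibits the precise scaling factor and makes the role of the normalization transparent.
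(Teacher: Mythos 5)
Your main route—applying the ``only if'' direction of Lemma~\ref{lemma:targetautform} with $\tau$ as the witnessing automorphism to conclude that the underlying forms $\sabs{g}^2 - \snorm{p}^2$ and $\sabs{G}^2 - \snorm{P}^2$ coincide—is precisely the paper's one-line proof. The extra care you take (fixing the normalization $\sabs{g(0)}^2 - \snorm{p(0)}^2 = 1$ and checking $\deg(\tau\circ f) = \deg f$ so that the exponents in $(1-\snorm{z}^2)^d$ agree) is detail the paper leaves implicit; the degree point can also be closed by observing that the bidegree of the common underlying form already determines $d$, so once the forms match the denominators do too. Your alternative computation using $1 - \snorm{\varphi_a(w)}^2 = (1-\snorm{a}^2)(1-\snorm{w}^2)/\sabs{1-\langle w,a\rangle}^2$ is exactly the mechanism inside Lemma~\ref{lemma:targetautform}; it is correct, and it has the virtue of exhibiting the positive scalar $(1-\snorm{a}^2)$ that the value-at-zero normalization absorbs, whereas the paper's route hides that bookkeeping inside the cited lemma.
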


\begin{proof}
The proof follows from Lemma~\ref{lemma:targetautform},
as the underlying form $r$ is the same for both $f$ and $\tau \circ f$.
\end{proof}

For precomposing with an automorphism, the $\Lambda$, up to a constant,
transforms by simply precomposing with the same automorhism.
The point is that when precomposing both $r$ and $(1-\snorm{z}^2)^d$, when
we try to clear denominators we multiply by the same function, that is,
for $\Lambda$ there is no need to clear the denominators.

\begin{lemma} \label{lemma:Lambda2}
Suppose $f \colon \bB_n \to \bB_N$ is a rational proper map and
$\psi \in \Aut(\bB_n)$.
Then $\Lambda_f \circ \psi = C \Lambda_{f \circ \psi}$.
\end{lemma}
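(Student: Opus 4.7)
The plan is to reduce everything to a single explicit computation using the standard transformation formula for an automorphism of the ball. First, write $\psi = U \varphi_{\alpha}$ as in \eqref{eq:varphia}; since $U$ is unitary, $\snorm{\psi(z)}^2 = \snorm{\varphi_{\alpha}(z)}^2$, and the classical identity $1 - \snorm{\varphi_\alpha(z)}^2 = \frac{(1-\snorm{\alpha}^2)(1-\snorm{z}^2)}{\sabs{1-\langle z,\alpha\rangle}^2}$ then gives
\begin{equation*}
(1-\snorm{\psi(z)}^2)^d = \frac{(1-\snorm{\alpha}^2)^d\,(1-\snorm{z}^2)^d}{\sabs{1-\langle z,\alpha\rangle}^{2d}}.
\end{equation*}

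Next, I would clear denominators in the composed map by setting $\tilde p(z) = (1-\langle z,\alpha\rangle)^d\, p(\psi(z))$ and $\tilde g(z) = (1-\langle z,\alpha\rangle)^d\, g(\psi(z))$, which are polynomials of degree at most $d$ because each component of $\psi$ is rational with denominator $1-\langle z,\alpha\rangle$. Substituting these together with the identity above into \eqref{eq:Lambda2}, the factors of $\sabs{1-\langle z,\alpha\rangle}^{2d}$ cancel and one obtains
\begin{equation*}
\Lambda_f(\psi(z)) = \frac{\sabs{\tilde g(z)}^2 - \snorm{\tilde p(z)}^2}{(1-\snorm{\alpha}^2)^d\,(1-\snorm{z}^2)^d}.
\end{equation*}
Writing $f \circ \psi = P/G$ in lowest terms, the equality of rational maps $\tilde p/\tilde g = P/G$ yields a polynomial $h$ with $\tilde p = h P$ and $\tilde g = h G$; hence $\sabs{\tilde g}^2 - \snorm{\tilde p}^2 = \sabs{h}^2 (\sabs{G}^2 - \snorm{P}^2)$ and
\begin{equation*}
\Lambda_f(\psi(z)) = \frac{\sabs{h(z)}^2}{(1-\snorm{\alpha}^2)^d}\, \Lambda_{f \circ \psi}(z,\bar{z}).
\end{equation*}

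The only real obstacle is to show that the polynomial $h$ is a nonzero constant, which is equivalent to the degree invariance $\deg(f \circ \psi) = \deg f$. One direction, $\deg(f \circ \psi) \leq d$, is immediate from the construction of $\tilde p$ and $\tilde g$; the reverse follows by applying the same argument to $f = (f \circ \psi) \circ \psi^{-1}$. Granted $\deg(f \circ \psi) = d$, we have $\max(\deg P, \deg G) = d$, and since $\deg \tilde p, \deg \tilde g \leq d$ with $\tilde p = h P$ and $\tilde g = h G$, we must have $\deg h = 0$. Setting $C = \sabs{h}^2/(1-\snorm{\alpha}^2)^d > 0$ finishes the proof.
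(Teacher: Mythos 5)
Your proof is correct and follows essentially the same route as the paper: precompose, clear denominators via $(1-\langle z,\alpha\rangle)^d$, and use the classical identity $1-\snorm{\varphi_\alpha(z)}^2 = \frac{(1-\snorm{\alpha}^2)(1-\snorm{z}^2)}{\sabs{1-\langle z,\alpha\rangle}^2}$. One small merit of your write-up is that you explicitly verify, via the degree-invariance argument and the UFD factorization $\tilde p = hP$, $\tilde g = hG$, that the cleared pair $(\tilde p, \tilde g)$ differs from a lowest-terms representative of $f\circ\psi$ only by a constant; the paper leaves this step implicit when it asserts that the transformed form $R$ is the underlying form of $f\circ\psi$.
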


\begin{proof}
Suppose that $r(z,\bar{z})=\sabs{g(z)}^2-\snorm{p(z)}^2$ is the underlying form
of $f$, and $R(z,\bar{z}) = \sabs{G(z)}^2-\snorm{P(z)}^2$ is the underlying form
of $f \circ \psi$.

First suppose $\psi$ is a unitary matrix.  Then
$G(z) = g(\psi(z))$ and 
$P(z) = p(\psi(z))$, as precomposing with a unitary does not introduce any
denominators and does not change the value at 0.
Furthermore $(1-\snorm{\psi(z)}^2)^d = (1-\snorm{z}^2)^d$.
Thus,
in this case $\Lambda_f \circ \psi = \Lambda_{f\circ \psi}$.

Now suppose that $\psi = \varphi_\alpha$ from \eqref{eq:varphia} for some $\alpha \in \bB_n$.
Let us first figure out how the underlying form $r$ transforms if we compose with $\varphi_{\alpha}$.
We compose $r$ and $\varphi_\alpha$ and then clear the denominators by multiplying by
$\sabs{1-\langle z,\alpha \rangle}^{2d}$.  To keep $r(0,0) = 1$ we also
multiply by a constant,
$\frac{1}{r(\alpha,\overline{\alpha})}$.
That is,
$r$ transforms to
\begin{equation}
R(z,\bar{z}) =
\frac{1}{r(\alpha,\overline{\alpha})} \sabs{1-\langle z,\alpha \rangle}^{2d}
r\bigl(
\varphi_{\alpha}(z),
\bar{\varphi}_{\alpha}(\bar{z})
\bigr) .
\end{equation}
Similarly,
$1-\snorm{z}^2$ transforms to
\begin{equation}
\frac{1}{1-\snorm{\alpha}^2} \sabs{1-\langle z,\alpha \rangle}^{2}
\left(
1-\norm{
\varphi_{\alpha}(z)
}^2
\right)
=
1-\snorm{z}^2 .
\end{equation}
That is, after clearing denominators,
$1-\snorm{z}^2$ is untouched by composing with an automorphism up
to a constant.

Therefore,
\begin{equation}
\Lambda \circ \varphi_{\alpha}
=
\frac{
r\bigl(
\varphi_{\alpha}(z),
\bar{\varphi}_{\alpha}(\bar{z})
\bigr)
}{
(1-\snorm{\varphi_\alpha(z)}^2)^d
}
=
\left(
\frac{r(\alpha,\overline{\alpha})}{ (1-\snorm{\alpha}^2)^d}
\right)
\,
\frac{
\sabs{G(z)}^2-\snorm{P(z)}^2
}{
(1-\snorm{z}^2)^d
} .
\end{equation}
\end{proof}

To show that $\Lambda$ is strongly rather than just strictly plurisubharmonic,
we need the following somewhat more general result that says that functions
like $\Lambda$ are, in fact, strongly convex near the boundary.

\begin{prop} \label{prop:convexnearbndry}
Suppose $B \subset \R^n$ is the unit ball and $h \colon \overline{B} \to \R$
is a $C^2$ function that is strictly positive on $\overline{B}$.
Define $w \colon B \to \R$ by $w(x) =
\frac{h}{(1-\snorm{x}^2)^k}$ for some $k \in \N$.  Then the least
eigenvalue of the Hessian of $w$ goes to $+\infty$ as $x$ goes to
$\partial B$.
\end{prop}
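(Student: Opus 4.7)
The plan is to compute the Hessian of $w=h/(1-\|x\|^2)^k$ explicitly and show that its dominant piece is a large positive multiple of the identity, while the indefinite cross terms can be absorbed into an additional positive rank-one contribution. Set $\rho(x)=1-\|x\|^2$, so that $\nabla\rho=-2x$ and $\mathrm{Hess}(\rho)=-2I$. Using the product/chain rule on $w=h\cdot\rho^{-k}$, I would write
\begin{equation}
\mathrm{Hess}(w)=\rho^{-k}\,\mathrm{Hess}(h)+2k\rho^{-k-1}\bigl(x(\nabla h)^T+(\nabla h)x^T\bigr)+4k(k+1)\rho^{-k-2}h\,xx^T+2k\rho^{-k-1}h\,I .
\end{equation}
Call these four terms $H_1,H_2,H_3,H_4$. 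The term $H_4$ is a positive scalar multiple of the identity blowing up like $\rho^{-k-1}$; $H_3$ is positive semidefinite of rank one; $H_1$ is bounded in norm by a constant times $\rho^{-k}$; and $H_2$ is the only indefinite piece, also of order $\rho^{-k-1}$ in norm.

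Next, for any unit vector $v$, I would set $a=v\cdot x$ and $b=v\cdot\nabla h(x)$ and record
\begin{equation}
v^T\mathrm{Hess}(w)v = 2kh\rho^{-k-1}+4kab\,\rho^{-k-1}+4k(k+1)h\,a^2\rho^{-k-2}+O(\rho^{-k}),
\end{equation}
where the $O(\rho^{-k})$ bound is uniform in $v$ and $x$ because $\mathrm{Hess}(h)$ is continuous on the compact set $\overline{B}$. The crux is to complete the square in the variable $a$, grouping the two middle terms so that the positive $H_3$ contribution swallows the indefinite $H_2$ contribution: this yields
\begin{equation}
4k(k+1)h\,\rho^{-k-2}\!\left(a+\tfrac{b\rho}{2(k+1)h}\right)^{\!2}-\tfrac{k b^2}{(k+1)h}\,\rho^{-k},
\end{equation}
in which the remaining negative piece is again $O(\rho^{-k})$ since $h$ is bounded below by a positive constant and $\nabla h$ is bounded above on $\overline{B}$.

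Putting the estimates together yields
\begin{equation}
v^T\mathrm{Hess}(w)v \;\ge\; 2kh_{\min}\,\rho^{-k-1} - C\rho^{-k} \;=\; \rho^{-k-1}\bigl(2kh_{\min}-C\rho\bigr)
\end{equation}
for some constant $C$ depending only on $h$ and $k$, where $h_{\min}=\min_{\overline{B}}h>0$. For $x$ sufficiently close to $\partial B$ (so that $\rho<kh_{\min}/C$), this is bounded below by $kh_{\min}\rho^{-k-1}$, which tends to $+\infty$. Since the estimate is uniform in the unit vector $v$, the least eigenvalue of the Hessian of $w$ blows up as $x\to\partial B$, as claimed. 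The main obstacle is the indefinite cross term $H_2$; the crucial observation is that its indefiniteness is confined to the plane spanned by $x$ and $\nabla h$, precisely where the positive rank-one term $H_3$ is large enough (by the factor $\rho^{-k-2}$) to dominate it after completing the square.
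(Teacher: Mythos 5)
Your proof is correct, and it takes a genuinely different route from the paper's. The paper reduces to $n=2$ by noting that the least eigenvalue of the Hessian at a point is obtained by restricting to two-dimensional subspaces (with compactness of the space of $2$-planes supplying uniformity), rotates so the point sits at $(x,0)$, writes out the $2\times 2$ Hessian, and observes that the $(1,1)$-entry grows like $(1-\snorm{x}^2)^{-(k+2)}$ while the off-diagonal and $(2,2)$-entries grow only like $(1-\snorm{x}^2)^{-(k+1)}$, so both the trace and the determinant blow up in the right way. You instead work directly in $\R^n$: you write out the full Hessian as $H_1+H_2+H_3+H_4$, evaluate the quadratic form on an arbitrary unit vector $v$, and complete the square in $a=v\cdot x$ so that the positive rank-one piece $4k(k+1)h\rho^{-k-2}xx^T$ absorbs the indefinite cross term $2k\rho^{-k-1}(x(\nabla h)^T+(\nabla h)x^T)$, leaving the isotropic term $2kh\rho^{-k-1}I$ to dominate the bounded remainder. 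Both arguments hinge on the same structural fact (the $\rho^{-k-2}$ rank-one piece beats the $\rho^{-k-1}$ cross term on the span of $x$), but your version is more self-contained, dispenses with the reduction to $2$-planes, and produces the explicit lower bound $\rho^{-k-1}(2kh_{\min}-C\rho)$ for the least eigenvalue, which also exhibits the blowup rate $\rho^{-k-1}$.
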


\begin{proof}
Write $H(w)$ for the Hessian.  We start with $n=2$.
Let $(x,y) \in \R^2$ be our coordinates.
By symmetry, we only need to do the computation at points where
$y=0$.
Write $\beta(x,y) = \frac{1}{(1-x^2-y^2)^k}$.
In the following, write the gradient as row vectors.
We compute (at a point where $y=0$),
\begin{equation}
\begin{split}
H(w) & = H(h\beta)
= \beta H(h) + (\nabla \beta)^t \nabla h + (\nabla h)^t \nabla \beta + h H(\beta) \\
& = 
\frac{1}{(1-x^2)^k}
H(h)
+
\begin{bmatrix}
\frac{h_x (2x) k}{(1-x^2)^{k+1}} & \frac{h_y (2x) k}{(1-x^2)^{k+1}} \\
0 & 0
\end{bmatrix}
+
\begin{bmatrix}
\frac{h_x (2x) k}{(1-x^2)^{k+1}} & 0 \\
\frac{h_y (2x) k}{(1-x^2)^{k+1}} & 0
\end{bmatrix}
\\
& \qquad
+
h
\begin{bmatrix}
\frac{2k(1-x^2)+4x^2k(k+1)}{(1-x^2)^{k+2}} & 0 \\
0 & \frac{2k}{(1-x^2)^{k+1}}
\end{bmatrix} .
\end{split}
\end{equation}
In other words, as $x$ approaches 1, the matrix $H(w)$ is
$\left[ \begin{smallmatrix} a & b \\ b & c \end{smallmatrix} \right]$
where $a$ and $c$ are positive, $b$ and $c$ grow as $\frac{1}{(1-x)^{k+1}}$
and $a$ grows as $\frac{1}{(1-x)^{k+2}}$.
Thus for $x$ close to $1$, we can
make the least eigenvalue arbitrarily large.
As the numerators in the expression for $a$, $b$, and $c$ are continuous functions on the
closed unit ball, this argument can be done uniformly as the point goes to
the boundary.

When $n > 2$, note that the least eigenvalue of $H(w)$ at $p$ is obtained by
minimizing $v^t H(w)|_p v$ over all unit vectors $v$.  Since every vector
$v$ based at a point $p$ lies in a $2$-dimensional subspace, and the set of
two dimensional subspaces is a compact set, the argument gives the result
for $n > 2$ as well.
\end{proof}

When the degree of the map is $d=1$, then $f = \tau \circ (z \oplus 0)$ for
some $\tau \in \Aut(\bB_N)$, so $r(z,\bar{z}) = 1-\snorm{z}^2$ and
$\Lambda(z,\bar{z}) \equiv 1$.  In particular, $\Lambda$ is not an exhaustion
function, nor is it strictly plurisubharmonic.  Therefore, $d > 1$
is required for the next two results.

\begin{lemma} \label{lemma:Lambda3}
Suppose $f \colon \bB_n \to \bB_N$ is a rational proper map
of degree $d > 1$.  Then
$\Lambda = \Lambda_f \colon \bB_n \to \R$ is a strongly plurisubharmonic
function such that
$\Lambda(z)$ goes to $+\infty$ as $z \to \partial \bB_n$.
In fact, $\Lambda$ is strongly convex near $\partial \bB_n$.
\end{lemma}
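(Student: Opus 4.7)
The plan is to leverage the factorization $r = q\bigl(1-\snorm{z}^2\bigr)$ from Section~2, which rewrites the $\Lambda$-function as $\Lambda = q/(1-\snorm{z}^2)^{d-1}$ with exponent $d-1 \geq 1$ (since $d > 1$). By Lemma~\ref{lemma:qpositive}, $q$ is bounded below by a positive constant on $\overline{\bB_n}$, so $\Lambda \to +\infty$ at the boundary, giving the exhaustion property immediately. For the strong convexity near $\partial\bB_n$, I would apply Proposition~\ref{prop:convexnearbndry} with $h = q$ and $k = d-1$; this gives that the least eigenvalue of the real Hessian of $\Lambda$ tends to $+\infty$ at $\partial \bB_n$. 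Since strong real convexity implies strong plurisubharmonicity via the standard identity $4\sum \Lambda_{j\bar k}w_j\bar w_k = H(\Lambda)(W,W) + H(\Lambda)(iW,iW)$ (with $W \in \R^{2n}$ the real vector underlying $w \in \C^n$), the complex Hessian of $\Lambda$ is uniformly positive definite on some collar neighborhood of $\partial\bB_n$.

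The remaining task is to obtain a uniform positive lower bound on the complex Hessian over the compact complement $K_\epsilon = \{\snorm{z} \leq 1-\epsilon\}$. By continuity of eigenvalues and compactness, it suffices to show that $\Lambda$ is strictly plurisubharmonic at every point of $\bB_n$. Here I would exploit Lemmas~\ref{lemma:Lambda1} and~\ref{lemma:Lambda2}: given $z_0 \in \bB_n$, choose $\psi \in \Aut(\bB_n)$ with $\psi(0)=z_0$ and $\tau \in \Aut(\bB_N)$ with $\tau(f(z_0))=0$, and set $F = \tau \circ f \circ \psi$; rescale $F = P/G$ in lowest terms so that $P(0)=0$ and $G(0)=1$. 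Since $\Lambda_F$ equals a positive constant times $\Lambda_f \circ \psi$ and $\psi$ is biholomorphic, strict positive definiteness of the complex Hessian of $\Lambda_f$ at $z_0$ is equivalent to that of the complex Hessian of $\Lambda_F$ at $0$.

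At the origin, the derivatives $\partial_j(1-\snorm{z}^2)$ vanish and $(1-\snorm{z}^2)(0)=1$, so the cross terms in the product-rule expansion of the complex Hessian of $r \cdot (1-\snorm{z}^2)^{-d}$ drop out, leaving
\begin{equation}
\Lambda_{F,j\bar k}(0) = r_{j\bar k}(0) + d\,\delta_{jk}
= G_j(0)\overline{G_k(0)} - \sum_{l=1}^N (P_l)_j(0)\overline{(P_l)_k(0)} + d\,\delta_{jk}.
\end{equation}
The rank-one piece is positive semidefinite, and the decisive input is the Schwarz lemma for holomorphic maps of balls: from $F(0)=0$ one has $\snorm{F(z)} \leq \snorm{z}$ on $\bB_n$ (restricting to complex lines reduces to the one-variable Schwarz lemma), hence $\snorm{DF(0)}_{\mathrm{op}} \leq 1$; and since $DF(0) = DP(0)$ under our normalization, the subtracted positive semidefinite matrix $\bigl[\sum_l (P_l)_j(0)\overline{(P_l)_k(0)}\bigr]$ is dominated by $I$. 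Therefore $[\Lambda_{F,j\bar k}(0)] \geq dI - I = (d-1)I > 0$, with a lower bound independent of the particular map. Combining strict positivity on $K_\epsilon$ (upgraded to a uniform bound by continuity and compactness) with the collar estimate from Proposition~\ref{prop:convexnearbndry} then gives strong plurisubharmonicity on all of $\bB_n$. The step that most needs care is the automorphism reduction: one must verify that pullback by the biholomorphism $\psi$ preserves positive definiteness of the complex Hessian (immediate because $D\psi(0)$ is invertible) and that Lemma~\ref{lemma:Lambda1} legitimately allows the target normalization $\tau(f(z_0))=0$ without affecting $\Lambda$; everything else reduces to Lemma~\ref{lemma:qpositive}, Proposition~\ref{prop:convexnearbndry}, and the Schwarz inequality.
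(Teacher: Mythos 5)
Your proof is correct and follows the same overall architecture as the paper's: reduce to strict plurisubharmonicity at the origin via Lemmas~\ref{lemma:Lambda1} and~\ref{lemma:Lambda2}, prove the exhaustion property from the factorization $\Lambda = q/(1-\snorm{z}^2)^{d-1}$ with $q>0$ on $\overline{\bB_n}$ (Lemma~\ref{lemma:qpositive}), obtain the boundary estimate from Proposition~\ref{prop:convexnearbndry}, and patch the interior compact set with the collar to upgrade to strong plurisubharmonicity. The only genuine difference is a local one: the paper restricts $\Lambda$ to a complex line through the origin (reducing to $n=1$), computes a one-variable Laplacian, and invokes the strict Schwarz inequality $\snorm{p'(0)}<1$ via the maximum principle, whereas you do the $n$-variable Levi form computation directly and use only the non-strict bound $\snorm{DF(0)}_{\mathrm{op}}\leq 1$. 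Your version is slightly more economical: since $d>1$ already supplies a positive margin, strictness of the Schwarz bound is unnecessary, and the resulting inequality $[\Lambda_{F,j\bar k}(0)] \geq (d-1)I$ is cleaner and map-independent (though the uniformity does not transport through $\psi$-pullback, so you still need the compactness argument on $K_\epsilon$, which you correctly supply). Either way the content is the same; no gap.
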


\begin{proof}
We start with strict plurisubharmonicity.
By the transitivity of $\Aut(\bB_n)$,
as composing $\Lambda$ with $\varphi_{\alpha}$ preserves strict plurisubharmonicity, and it
also transforms to another $\Lambda$-function for another proper map by
Lemma~\ref{lemma:Lambda2}, we
simply need to prove strict pluriharmonicity at the origin.
By Lemma~\ref{lemma:Lambda1} we can assume that $f = \frac{p}{g}$ where $p(0)=0$
and $g(0) = 1$.
A function is strictly plurisubharmonic if its restriction to every complex line
is strictly subharmonic.
We restrict to a
complex line through the origin, and so we can, without loss of generality,
assume $n=1$.
We follow the argument of the typical proof of Schwarz's lemma:
First, $f(z) = z F(z)$, as each component is divisible by $z$.
As $F$ is not constant ($d > 1$),
then $F(z)$ still takes the ball to the ball by the maximum principle.
In particular, $\snorm{F(z)} < 1$ for all $z \in \bB_1$.  Then
$\snorm{f'(0)} = \snorm{F(0)} < 1$.
Using the fact that $p(0)=0$ and $g(0)=1$ and the quotient rule we have
$f'(0) = p'(0)$, so $\snorm{p'(0)} < 1$.

Now compute the Laplacian and notice it is strictly positive
(using $d > 1$):
\begin{multline}
\frac{\partial^2}{\partial z \partial \bar{z}}\Big|_{z=\bar{z}=0} \Lambda(z,\bar{z})
=
d \, \sabs{g(0)}^2 - d \, \snorm{p(0)}^2 + \sabs{g'(0)}^2-\snorm{p'(0)}^2
\\
=
d  + \sabs{g'(0)}^2-\snorm{p'(0)}^2
>
d  + \sabs{g'(0)}^2-1 > 0 .
\end{multline}

Therefore, $\Lambda$ is strictly plurisubharmonic.
We need to show that it
is an exhaustion function.
Consider the quotient polynomial $q(z,\bar{z}) =
\frac{r(z,\bar{z})}{1-\snorm{z}^2}$.  Lemma~\ref{lemma:qpositive} says that
$q > 0$ on the closed ball $\overline{\bB_n}$, so
\begin{equation}
\Lambda(z,\bar{z}) = 
\frac{\sabs{g(z)}^2-\snorm{p(z)}^2}{(1-\snorm{z}^2)} \,
\frac{1}{(1-\snorm{z}^2)^{d-1}} =
q(z,\bar{z})
\frac{1}{(1-\snorm{z}^2)^{d-1}} .
\end{equation}
The result follows,
again using that $d > 1$.

Since $\Lambda(z,\bar{z}) = \frac{q(z,\bar{z})}{(1-\snorm{z}^2)^{d-1}}$,
$d-1 > 0$, and $q$ is positive on the closed unit
ball, Proposition~\ref{prop:convexnearbndry} implies that the least
eigenvalue of the (real) Hessian of $\Lambda$ goes to $+\infty$ as $z$
approaches the boundary.  Hence the least eigenvalue of the complex Hessian
also goes to $+\infty$, and $\Lambda$, being strictly plurisubharmonic on
all of $\bB_n$, must in fact be strongly plurisubharmonic.
\end{proof}

\begin{prop} \label{prop:g}
Suppose $g(z) = 1 + g_2 z^2 + g_3 z^3 + \cdots + g_k z^k$ is a polynomial
in one variable with no zeros on $\overline{\D}$.  Then $\sabs{g_2} < \frac{k}{2}$.
\end{prop}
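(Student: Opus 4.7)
The plan is to factor $g$ over its roots, use the absence of a linear term to relate the two lowest power sums of the reciprocal roots, and then estimate.

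Since $g$ has no zeros on $\overline{\D}$, all of its roots $a_1, \dots, a_m$ (where $m = \deg g \leq k$) satisfy $\sabs{a_j} > 1$. Normalizing by $g(0)=1$, I would write
\begin{equation}
g(z) = \prod_{j=1}^{m} \left(1 - \frac{z}{a_j}\right) = \sum_{\ell=0}^{m} (-1)^\ell e_\ell\!\left(\tfrac{1}{a_1},\dots,\tfrac{1}{a_m}\right) z^\ell,
\end{equation}
where $e_\ell$ denotes the $\ell$-th elementary symmetric polynomial.

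Matching coefficients with $g(z) = 1 + g_2 z^2 + \cdots + g_k z^k$, the vanishing of the linear term gives
\begin{equation}
\sum_{j=1}^{m} \frac{1}{a_j} = 0,
\end{equation}
while the quadratic coefficient is $g_2 = e_2\!\bigl(\tfrac{1}{a_1},\dots,\tfrac{1}{a_m}\bigr)$. Squaring the previous identity and applying Newton's identity $e_1^2 = p_2 + 2 e_2$ (where $p_2$ is the second power sum) yields
\begin{equation}
g_2 = -\frac{1}{2} \sum_{j=1}^{m} \frac{1}{a_j^2}.
\end{equation}

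The conclusion is then immediate from the triangle inequality together with $\sabs{a_j} > 1$:
\begin{equation}
\sabs{g_2} \leq \frac{1}{2} \sum_{j=1}^{m} \frac{1}{\sabs{a_j}^2} < \frac{m}{2} \leq \frac{k}{2}.
\end{equation}

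There is no real obstacle here; the whole argument is the standard Vieta/Newton manipulation. The one thing worth double-checking is the strict inequality, which comes for free since each summand $1/\sabs{a_j}^2$ is strictly less than $1$ (we are assuming no zeros on the closed disc, not merely on the open disc).
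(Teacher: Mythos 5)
Your proof is correct and takes essentially the same route as the paper: factor $g$ over its roots, use the vanishing linear term to get $\sum r_j = 0$ for the reciprocal roots, square to relate $g_2$ to the second power sum, and estimate term by term. The only cosmetic difference is that you invoke Newton's identity by name and track $m = \deg g \le k$ explicitly, whereas the paper expands $(\sum r_j)^2$ directly and absorbs any degree drop by allowing some $r_j = 0$.
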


\begin{proof}
Write $g(z) = (1-r_1 z)(1-r_2 z) \cdots (1-r_k z)$ where $\sabs{r_j} < 1$
for all $j$, and $r_1 + r_2 + \cdots + r_k = 0$.
Write
\begin{equation}
0 =
\left( \sum_{j=1}^k r_j \right)^2 = 
\sum_{j=1}^k r_j^2
+2
\left(
\sum_{1 \leq j < \ell \leq k} r_j r_\ell
\right) .
\end{equation}
Using this equality and
computing $g_2$ in terms of $r_j$s, we find
\begin{equation}
\sabs{g_2}
=
\abs{
\sum_{1 \leq j < \ell \leq k} r_j r_\ell 
}
=
\frac{1}{2}
\abs{
\sum_{j=1}^k r_j^2
}
\leq
\frac{1}{2}
\left(
\sum_{j=1}^k \sabs{r_j}^2
\right)
< \frac{k}{2} .
\end{equation}
\end{proof}

\begin{lemma} \label{lemma:Lambda4}
Suppose $f \colon \bB_n \to \bB_N$ is a rational proper map
of degree $d > 1$.  Then 
$\Lambda = \Lambda_f \colon \bB_n \to \R$ has
a unique critical point (a minimum) in $\bB_n$.
\end{lemma}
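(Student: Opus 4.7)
Existence is immediate: $\Lambda$ is continuous, positive, and tends to $+\infty$ at $\partial \bB_n$ by Lemma~\ref{lemma:Lambda3}, so it attains a minimum at some interior point, which is a critical point.

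For uniqueness, the plan is to show every critical point of $\Lambda$ is a non-degenerate local minimum. Once this is established, each critical point is isolated and the critical set is finite by the exhaustion property, so the Morse inequalities applied on a large compact sublevel set (which deformation retracts onto $\bB_n$) give $\sum_k (-1)^k c_k = \chi(\bB_n) = 1$; since every $c_k$ with $k \geq 1$ vanishes, $c_0 = 1$. Equivalently, two distinct strict local minima would produce a saddle-type mountain-pass critical point inside a compact sublevel set, contradicting the local-minimum property.

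To analyze a critical point $z_0$, I would first normalize. Lemma~\ref{lemma:Lambda2} lets me precompose $f$ with the automorphism $\varphi_{z_0}$ that sends $z_0 \mapsto 0$; this only rescales $\Lambda$ by a positive constant, so the critical-point structure is preserved. Lemma~\ref{lemma:Lambda1} then lets me post-compose with a target automorphism so that the new map $f = p/g$ in lowest terms satisfies $p(0) = 0$ and $g(0) = 1$, leaving $\Lambda$ unchanged. A direct calculation gives $\partial \Lambda/\partial z_j(0) = g_{z_j}(0)$, so the critical condition at $0$ becomes the vanishing of the linear terms of $g$. Second-derivative computations at the origin yield
\begin{equation}
A_{jk} := \Lambda_{z_j \bar{z}_k}(0) = d\,\delta_{jk} - (p'(0)^* p'(0))_{jk},
\qquad
B_{jk} := \Lambda_{z_j z_k}(0) = g_{z_j z_k}(0).
\end{equation}
As a real quadratic form on $\bC^n \cong \bR^{2n}$, the real Hessian of $\Lambda$ at $0$ acts by $v \mapsto v^* A v + \Re(v^T B v)$, which is positive definite as soon as $\lambda_{\min}(A) > \max_{\snorm{v}=1}\sabs{v^T B v}$; for complex symmetric $B$ the Takagi factorization identifies this maximum with $\snorm{B}_{\mathrm{op}}$.

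Two estimates close the loop. From the Schwarz bound $\snorm{p'(0)}_{\mathrm{op}} \leq 1$, one has $\lambda_{\min}(A) = d - \snorm{p'(0)}_{\mathrm{op}}^2 \geq d - 1$. For the reverse estimate, restricting $g$ to the complex line through any unit $v$ produces a one-variable polynomial $g(\zeta v)$ of degree $k(v) \leq d - 1$ with no zeros on $\overline{\bD}$ (Lemma~\ref{lemma:chiappari}) and no linear coefficient in $\zeta$ (as $g$ has no linear terms). Proposition~\ref{prop:g} then strictly bounds its $\zeta^2$-coefficient $\tfrac{1}{2} v^T B v$ by $k(v)/2$, so $\sabs{v^T B v} < k(v) \leq d - 1$; compactness of the unit sphere yields $\snorm{B}_{\mathrm{op}} < d - 1 \leq \lambda_{\min}(A)$, strictly, so the real Hessian is positive definite and each critical point is a non-degenerate local minimum. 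The main obstacle in this plan is this last strict inequality: Schwarz delivers only a non-strict lower bound on $\lambda_{\min}(A)$ once $n \geq 2$ (a ball map may well be linear along some line through the origin), so Proposition~\ref{prop:g} applied uniformly across all complex directions is essential to furnish the strict upper bound on $\snorm{B}_{\mathrm{op}}$.
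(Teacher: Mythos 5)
Your proof is correct and follows essentially the same approach as the paper: nondegeneracy of each critical point via the Hessian bound combining Schwarz ($\snorm{p'(0)} \leq 1$) with Proposition~\ref{prop:g} ($\sabs{g_2} < \frac{d-1}{2}$), then uniqueness via mountain pass or Morse theory. The only stylistic difference is that you bound $\lambda_{\min}(A)$ against $\snorm{B}_{\mathrm{op}}$ for the full $n\times n$ matrices, while the paper restricts $\Lambda$ to complex lines and checks the $2\times 2$ Hessian determinant and trace there; these are equivalent since every real direction lies in a complex line, and your remark that Schwarz is only non-strict along some lines (so the strictness must come from Proposition~\ref{prop:g}) is an accurate reading of the situation, which the paper's prose glosses over slightly but its chain of inequalities still handles correctly.
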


\begin{proof}
Since $\Lambda \to \infty$ as $z$ approaches $\partial \bB_n$,
there must be at least one critical point,
and so after composing with an automorphism we can assume it is at the
origin.  We restrict to an arbitrary complex line through the origin.
If we show that the determinant of the (real) Hessian of this restricted function
at the origin is strictly positive, then since the Laplacian (the trace of the Hessian)
is also strictly positive by Lemma~\ref{lemma:Lambda3},
the critical point is a strict local minimum.
Hence, without loss of generality
assume that $n=1$.

Again, by Lemma~\ref{lemma:Lambda1}, we can assume that $f = \frac{p}{g}$ where $p(0)=0$
and $g(0) = 1$.  Write
\begin{equation}
g(z) = 1 + g_1 z + g_2 z^2 + \cdots + g_{d-1} z^{d-1} .
\end{equation}
Using
\begin{equation}
\frac{1}{(1-\sabs{z}^2)^d} = 
1 + d z \bar{z}
+ \text{ (higher order terms),}
\end{equation}
and
\begin{equation}
\snorm{p(z)}^2 = \snorm{p'(0)}^2 z \bar{z}
+ \text{ (higher order terms),}
\end{equation}
we expand $\Lambda$ at the origin up to the second order:
\begin{multline}
\Lambda(z,\bar{z})
=
1+g_1 z + \bar{g}_1 \bar{z} + g_2 z^2 + \bar{g}_2 \bar{z}^2
+ \bigl(d + \sabs{g_1}^2 - \snorm{p'(0)}^2 \bigr) z \bar{z}
\\
+
\text{ (higher order terms).}
\end{multline}
The origin is a critical point of $\Lambda$, and so $g_1 = 0$.
The (real) Hessian determinant of $\Lambda$ at the origin is
$4 (d- \snorm{p'(0)}^2)^2
-
16 \sabs{g_2}^2$.
With $g_1=0$, the function $g$ satisfies the hypothesis of
Proposition~\ref{prop:g}.  Hence, $\sabs{g_2} < \frac{d-1}{2}$.
Furthermore, as in Lemma~\ref{lemma:Lambda3}, $\snorm{p'(0)} < 1$.
Therefore, the (real) Hessian determinant of $\Lambda$ at the origin
is positive:
\begin{equation}
4 {\bigl(d- \snorm{p'(0)}^2\bigr)}^2
-
16 \sabs{g_2}^2
>
4 (d- 1)^2
-
16 \sabs{g_2}^2 > 0.
\end{equation}

As $\Lambda$ has a strict local minimum at every critical point
and $\Lambda \to \infty$ as $z$ approaches the boundary,
the critical point must be unique.
This follows from what is now known as Courant's Mountain pass theorem,
see \cite[p.\ 223]{Courant},
which asserts that if a function goes to infinity at the
boundary and has two strict minima, then it must also have a saddle.
Another way to see that the minimum is unique is to use Morse theory:
The closed ball (apply to a closed ball of radius $1-\epsilon$
and note that $\nabla \Lambda$ points outside of this ball on the boundary)
has Euler characteristic 1,
and since the only critical points are minima, the Euler characteristic
equals the number of the critical points.
\end{proof}

The four items of Theorem~\ref{thm:Lambda} are simply
lemmas
\ref{lemma:Lambda1},
\ref{lemma:Lambda2},
\ref{lemma:Lambda3}, and
\ref{lemma:Lambda4}.


\section{Normalizing the denominator}

The key point in Theorem~\ref{thm:normdenom} is that we can zero out the
linear terms of the denominator while making sure the map takes the origin
to the origin.

\begin{lemma} \label{lemma:alphaiscrit}
Suppose $f \colon \bB_n \to \bB_N$ is a rational proper map
of degree $d > 1$.
Then for some
$\tau \in \Aut(\bB_N)$, the map
$\tau \circ f \circ \varphi_\alpha$
takes the origin to the origin, and its denominator (when written in lowest terms)
has no linear terms
if and only if $\alpha \in \bB_n$
is a critical point of the corresponding $\Lambda$-function.
\end{lemma}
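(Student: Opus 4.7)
The plan is to reduce both directions of the equivalence, via the transformation properties proved in Lemmas \ref{lemma:Lambda1} and \ref{lemma:Lambda2}, to a single statement about the linear part of $\Lambda_h$ at the origin, where $h = \tau \circ f \circ \varphi_\alpha$ is the normalized map.

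First I would fix $\alpha \in \bB_n$ and, using transitivity of $\Aut(\bB_N)$, select $\tau$ with $\tau(f(\alpha)) = 0$, so that $h := \tau \circ f \circ \varphi_\alpha$ fixes the origin. Writing $h = \frac{P}{G}$ in lowest terms and rescaling so that $P(0) = 0$ and $G(0) = 1$, I note that any other admissible $\tau$ differs from this one by post-composition with a unitary of $\bB_N$ fixing $0$, which only reshuffles the components of $P$ without altering $G$. So "denominator has no linear terms" is a property of $\alpha$ alone, independent of which $\tau$ is chosen.

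Next I would express the "no linear terms in $G$" condition as a critical point condition for $\Lambda_h$ at $0$. Write $G(z) = 1 + G_1(z) + O(\snorm{z}^2)$ with $G_1$ the linear part. The normalizations give $\sabs{G(z)}^2 = 1 + G_1(z) + \overline{G_1(z)} + O(\snorm{z}^2)$ and $\snorm{P(z)}^2 = O(\snorm{z}^2)$, while $(1-\snorm{z}^2)^{-d} = 1 + O(\snorm{z}^2)$. Multiplying through, one obtains
\begin{equation*}
\Lambda_h(z,\bar z) = 1 + G_1(z) + \overline{G_1(z)} + O(\snorm{z}^2).
\end{equation*}
Hence the differential of $\Lambda_h$ at $0$ vanishes if and only if $G_1 \equiv 0$, that is, if and only if $G$ has no linear terms.

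Finally I would transfer the critical point condition from $\Lambda_h$ at $0$ back to $\Lambda_f$ at $\alpha$. By Lemma \ref{lemma:Lambda1}, $\Lambda_h = \Lambda_{f \circ \varphi_\alpha}$, and by Lemma \ref{lemma:Lambda2} there is a positive constant $C$ with $\Lambda_f \circ \varphi_\alpha = C\,\Lambda_h$. Since $\varphi_\alpha$ is a biholomorphism sending $0$ to $\alpha$, the chain rule implies that $0$ is a critical point of $\Lambda_h$ if and only if $\alpha$ is a critical point of $\Lambda_f$. Combining this with the previous equivalence yields the lemma. I do not anticipate a real obstacle: the normalizations $P(0) = 0$ and $G(0) = 1$ are engineered precisely so that $\snorm{P}^2$ and $(1-\snorm{z}^2)^d$ cannot contribute to the linear part of $\Lambda_h$, leaving the linear part of $G$ as the sole obstruction to being critical at the origin.
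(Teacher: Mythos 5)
Your proof is correct, and it takes a genuinely cleaner route than the paper's. The paper works directly at the point $\alpha$: it defines the pulled-back form $R(z,\bar z)=\sabs{1-\langle z,\alpha\rangle}^{2d}\, r(\varphi_\alpha(z),\bar\varphi_\alpha(\bar z))$, invokes Lemma~\ref{lemma:Gfrommatrix} to say the new denominator is (up to a constant) $R(z,0)$, differentiates $R$ at the origin to get a concrete equation in $\alpha$, and then verifies by a separate computation of $\nabla\Lambda$ at $\alpha$ (after rotating so $\alpha=(\alpha_1,0,\dots,0)$) that the two vanishing conditions coincide. You instead push everything to the origin using the two already-proved invariance facts, Lemmas~\ref{lemma:Lambda1} and~\ref{lemma:Lambda2}: once the map is normalized so $P(0)=0$ and $G(0)=1$, the linear part of $\Lambda_h$ visibly equals $G_1(z)+\overline{G_1(z)}$ because $\snorm{P}^2$ and $(1-\snorm{z}^2)^{-d}$ contribute nothing of degree one, and then the chain rule with $\varphi_\alpha$ and the positivity of the constant $C$ carry the criticality condition back to $\alpha$. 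Your approach buys simplicity: no explicit differentiation of $\varphi_\alpha$, no rotation trick, no side reference to Lemma~\ref{lemma:Gfrommatrix} -- everything is absorbed into the transformation lemmas. What the paper's computation buys in exchange is an explicit, coordinate-level equation \eqref{eq:maineqforalpha} for $\alpha$, which is of independent use if one actually wants to locate the critical point numerically, as the introduction suggests. One small point you handled correctly but should not omit in a final write-up: the constant $C$ in Lemma~\ref{lemma:Lambda2} must be positive (it is, being $r(\alpha,\bar\alpha)/(1-\snorm{\alpha}^2)^d$), otherwise the final equivalence of critical points would not follow from the proportionality.
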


The map $\tau$ is simply the automorphism that takes $f(\alpha)$ to $0$.

\begin{proof}
Let $r(z,\bar{z}) = \sabs{g(z)}^2-\snorm{p(z)}^2$.
Consider
\begin{equation}
R(z,\bar{z}) =
\sabs{1-\langle z,\alpha \rangle}^{2d}
r\bigl(
\varphi_{\alpha}(z),
\bar{\varphi}_{\alpha}(\bar{z})
\bigr) ,
\end{equation}
that is, consider the transformed form up to a constant that we forget for simplicity.
So up to a constant, by Lemma~\ref{lemma:Gfrommatrix}, we have that the
new denominator is (up to a constant) $R(z,0)$.  The new denominator has
zero linear coefficients if for all $j$
\begin{equation}
\frac{\partial R}{\partial z_j} \Big|_{z=\bar{z}=0} = 0 .
\end{equation}

We differentiate 
\begin{equation}
\begin{split}
\frac{\partial}{\partial z_j}
R(z,\bar{z}) & =
d(-\bar{\alpha}_j)\bigl(1-\langle z, \alpha \rangle\bigr)^{d-1}
\bigl(1-\langle \alpha, z \rangle\bigr)^{d}
r\bigl(
\varphi_{\alpha}(z),
\bar{\varphi}_{\alpha}(\bar{z})
\bigr)
\\
&
\phantom{={}}
+
\bigl(1-\langle z, {\alpha} \rangle\bigr)^{d}
\bigl(1-\langle {\alpha}, z \rangle\bigr)^{d}
\,
\nabla_z r\big|_{(
\varphi_{\alpha}(z),
\bar{\varphi}_{\alpha}(\bar{z})
)}
\cdot
\frac{\partial \varphi_{\alpha}}{\partial z_j}
.
\end{split}
\end{equation}
By $\nabla_z$ we mean the gradient in the $z$ variables
(and not the $\bar{z}$ variables).
To find the $z$ coefficients of $R$, we set $z=0$ and $\bar{z}=0$ above,
and then set the whole expression to zero:
\begin{equation} \label{eq:maineqforalpha}
d(-\bar{\alpha}_j)
r(\alpha,\bar{\alpha})
+
\nabla_z r \big|_{(\alpha,\bar{\alpha})}
\cdot
\frac{\partial \varphi_{\alpha}}{\partial z_j}\Big|_{z=0}
=0 .
\end{equation}
We now wish to show that
this solution is precisely the critical point of
\begin{equation}
\Lambda(z,\bar{z}) =
\frac{r(z,\bar{z})}{(1-\snorm{z}^2)^d} .
\end{equation}

For every unitary matrix $V$, we have $\varphi_{V\alpha}(Vz) = V \varphi_{\alpha}(z)$.
So if there is a solution $\alpha$ to \eqref{eq:maineqforalpha},
we rotate our map by $V$
and assume that $\alpha = (\alpha_1,0,\ldots,0)$.
We differentiate $(\varphi_\alpha)_k$, the $k$th component of
$\varphi_\alpha$.
\begin{equation}
\frac{\partial (\varphi_\alpha)_k}{\partial z_j}  \Big|_{z=0}
=
\begin{cases}
-\bigl(1-\sabs{\alpha_1}^2\bigr) & \text{if } k=j=1 , \\
\sqrt{1-\sabs{\alpha_1}^2} & \text{if } k=j \text{ and } k\not=1 , \\
0 & \text{else.}
\end{cases}
\end{equation}
For any $j$,
\begin{equation}
\frac{\partial \Lambda}{\partial z_j} =
\frac{(1-\snorm{z}^2)^d \frac{\partial r}{\partial z_j} - d(-\bar{z}_j)
(1-\snorm{z}^2)^{d-1} r}{(1-\snorm{z}^2)^{2d}}
.
\end{equation}
If $j= 1$, then \eqref{eq:maineqforalpha} becomes
\begin{equation}
d(-\bar{\alpha}_1)
r(\alpha,\bar{\alpha})
-
\bigl(1-\sabs{\alpha_1}^2 \bigr)
\frac{\partial r}{\partial z_1} \Big|_{(\alpha,\bar{\alpha})}
=0 .
\end{equation}
And that is equivalent to
$\frac{\partial \Lambda}{\partial z_1}\big|_{(\alpha,\bar{\alpha})} = 0$.

Similarly when $j > 1$, then
\eqref{eq:maineqforalpha} becomes
\begin{equation}
\sqrt{1-\sabs{\alpha_1}^2}
\,
\frac{\partial r}{\partial z_j} \Big|_{(\alpha,\bar{\alpha})}
=0 .
\end{equation}
And that is also equivalent to
$\frac{\partial r}{\partial z_j} \Big|_{(\alpha,\bar{\alpha})}=0$ and
hence
$\frac{\partial \Lambda}{\partial z_j}\big|_{(\alpha,\bar{\alpha})} = 0$
(noting that $\alpha_j = 0$).

Repeating the argument by differentiating with respect to $\bar{z}_j$,
we obtain that the conjugate of \eqref{eq:maineqforalpha} is
equivalent to $\frac{\partial \Lambda}{\partial
\bar{z}_j}\big|_{(\alpha,\bar{\alpha})} = 0$.
\end{proof}

We now prove Theorem~\ref{thm:normdenom}.

\begin{proof}[Proof of Theorem~\ref{thm:normdenom}]
If $d = 1$, the theorem is trivial, so assume the degree $d > 1$.
Write $\Lambda$ as before.  By Theorem~\ref{thm:Lambda}, 
$\Lambda$ has a unique critical point $\alpha$.
By Lemma~\ref{lemma:alphaiscrit}, the critical point gives
the unique automorphism $\varphi_\alpha$ such that if we
let $\tau$ be an automorphism that takes $f(\alpha)$ to $0$, and we write
$\tau \circ f \circ \varphi_\alpha = \frac{P}{G}$ in lowest terms, where
$G(0)=1$, then $P(0) = 0$ and $G$ has no linear terms.  That is,
\begin{equation}
G(z) = 1 + G_2(z) + G_3(z) + \cdots + G_{d-1}(z) ,
\end{equation}
where $G_j$ are homogeneous of degree $j$.
By the uniqueness of the critical point of $\Lambda$, the only
automorphisms that we can precompose with to keep $G$ in this form are
unitary matrices.
If we are only allowed to precompose with unitaries, the only
automorphisms on the target are also unitaries as they must fix the origin.

Consider $z$ as a column vector, then
\begin{equation}
G_2(z) = z^t C z
\end{equation}
for some complex symmetric matrix $C$ (the $z^t$ is the regular transpose).
Applying a unitary $V$, we find that
$G_2$ transforms to
\begin{equation}
z^t V^tCV z .
\end{equation}
It is an elementary result in linear algebra (see e.g.~\cite{HornJohnson}
Corollary 4.4.4 part c)
that a symmetric matrix can be
diagonalized by congruence with unitary matrices,
where the diagonal entries are all nonnegative and sorted according size.
The diagonal entries are the singular values (moduli of the eigenvalues) of
the matrix $C$.
In other words, after such a unitary
$G$ transforms to
\begin{equation}
G(z) = 1 + \sum_{k=1}^n \sigma_k z_k^2 + \text{ (higher order terms)},
\end{equation}
where $0 \leq \sigma_1 \leq \cdots \leq \sigma_n$.
By restricting to $z_1=\cdots=z_{n-1}=0$, we obtain a polynomial in 
one variable of degree $d-1$ or that satisfies the hypotheses of
Proposition~\ref{prop:g}.  Thus $\sigma_n \leq \frac{d-1}{2}$.
The only unitary matrices that can still be applied
that preserve the normal form are those that preserve this quadratic part.
\end{proof}


\section{Existence of third degree maps} \label{section:exist}

As we mentioned, any polynomial function $g(z)$ that is
non-zero on the closed ball $\overline{\bB_n}$ is the
denominator of a proper map of balls written in lowest terms, see
D'Angelo~\cite{DAngelo:spheresbook}.  The degree of the resulting map
depends not only on the degree of $g$, but also on the coefficients.
So a natural question is: Given
\begin{equation}
g(z) = 1 + \sum_{k=1}^n \sigma_k z_k^2 ,
\end{equation}
does there exist a degree 3 polynomial map $p$ such that $p(0)=0$ and $f = \frac{p}{g}$
is a rational proper map of balls written in lowest terms?
We show that for small enough
$\sigma_k$, such a map always exists.

For $\frac{p}{g}$ to be a proper map of balls, $g$ cannot be
zero on $\overline{\bB_n}$, and so Proposition~\ref{prop:g} gives a crude upper
bound for $\sigma_k$, that is, $\frac{d-1}{2}$.
For $d=3$, we find that $\sigma_k < 1$.
But as mentioned in the introduction,
for $g(z) = 1+\sigma z_1^2 + \sigma z_2^2$,
the degree of the numerator
required for a proper map to exist goes to infinity as $\sigma$ approaches 1.
For a degree 3 numerator to exist, the actual inequalities are
complicated except in the simplest case of $n=1$.

For $n=1$, the problem is easy to solve.
If $f \colon \bB_1 \to \bB_N$ is a rational proper map with
denominator $g(z) = 1 + \sigma z^2$, then $\sigma < 1$ as mentioned above.
To show existence it is enough to consider $N=1$.  In the introduction,
we computed the normal form for degree 3 proper rational maps
$f \colon \bB_1 \to \bB_1$, and we showed one exists with
denominator $1+\sigma z^2$ for every $0 \leq \sigma < 1$.

For $n > 1$,
the exact inequalities on $\sigma_k$ are more complicated.
From the proof below, these are inequalities guaranteeing a
certain matrix with some extra variables is positive semidefinite.
For any particular $n$ it is possible
to constructively write down the particular inequalities.
We dispense with attempting to
write down the exact inequalities and simply show that for
small enough $\sigma_k$, a degree 3 numerator $p$ always exists
making $\frac{p}{g}$ a proper map of balls written in lowest terms.
We refer the reader to a recent new preprint by
D'Angelo~\cite{DAngelo:preprint} for more precise information.

Recall that the $N$ required (after having fixed an $n$) is finite.
That is, the maximal $N$ is the number of
nonconstant degree 3 or lower monomials in $n$ variables.
That is, $N+1$ is the size of the matrix of coefficients 
of the underlying form $r(z,\bar{z})$ corresponding to degree~3 maps.

\begin{prop} \label{prop:exist}
\pagebreak[2]
Given $n$, there exists an $\epsilon > 0$ such that whenever
$0 \leq \sigma_1 \leq \cdots \leq \sigma_n < \epsilon$,
then there exists a degree 3 (or lower) polynomial $p \colon \C^n \to \C^N$
(where $N$ depends only on~$n$)
such that $p(0) = 0$ and
\begin{equation}
z \mapsto
\frac{p(z)}{1+\sum_{k=1}^n \sigma_k z_k^2}
\end{equation}
is a rational proper map of $\bB_n$ to $\bB_N$ written in lowest terms.

Moreover, the set of $\sigma_1,\ldots,\sigma_n$ for which 
a degree $p$ numerator exists as above is a semialgebraic set.
\end{prop}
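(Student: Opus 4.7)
My plan is to recast the existence question as a positive semidefiniteness problem for a Hermitian coefficient matrix, set up a base solution at $\sigma=0$ whose matrix lies in the \emph{interior} of the relevant PSD cone, and then conclude by a small perturbation; the semialgebraic statement will follow from Tarski--Seidenberg. First I would reformulate: a degree $\leq 3$ numerator $p$ with $p(0)=0$ making $\frac{p}{g}$ a proper rational map in lowest terms exists if and only if there is a Hermitian polynomial $q(z,\bar z)$ of bi-degree at most $(2,2)$ with $\sabs{g(z)}^2 - q(z,\bar z)\bigl(1-\snorm{z}^2\bigr) = \snorm{p(z)}^2$ and $q>0$ on $\overline{\bB_n}$. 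By Lemma~\ref{lemma:Gfrommatrix} the pure holomorphic part of $q$ is forced to equal $g(z)=1+\sum_{k=1}^n\sigma_k z_k^2$, so I would write
\[
q_\sigma(z,\bar z) = 1 + \sum_{k=1}^n \sigma_k(z_k^2+\bar z_k^2) + \delta q(z,\bar z),
\]
where $\delta q$ contains only monomials $z^\alpha\bar z^\beta$ with $\alpha,\beta\neq 0$. Existence of $p$ is then equivalent to positive semidefiniteness of the coefficient matrix (in the monomial basis of degree $\leq 3$ monomials) of the Hermitian polynomial $M(\sigma,\delta q):=\sabs{g(z)}^2-q_\sigma(z,\bar z)\bigl(1-\snorm{z}^2\bigr)$.

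The main obstacle is the rank deficiency at the naive base point $p_0(z)=z$, $q_0\equiv 1$: this gives $M(0,0)=\snorm{z}^2$, whose coefficient matrix has rank only $n$ on the non-constant submatrix and so sits on the \emph{boundary} of the PSD cone, where a naive perturbation could push it outside. I would fix this by choosing a tensored base map
\[
p_0(z)=\sqrt{c_1}\,z\,\oplus\,\sqrt{c_2}\,z^{\otimes 2}\,\oplus\,\sqrt{c_3}\,z^{\otimes 3},\qquad c_1+c_2+c_3=1,\ c_i>0,
\]
with $\delta q_0=(c_2+c_3)\snorm{z}^2+c_3\snorm{z}^4$. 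A direct calculation, using $\snorm{z^{\otimes d}}^2 = \snorm{z}^{2d} = \sum_{|\alpha|=d}\binom{d}{\alpha}\sabs{z^\alpha}^2$, gives
\[
M(0,\delta q_0)=c_1\snorm{z}^2+c_2\snorm{z}^4+c_3\snorm{z}^6,
\]
whose matrix is diagonal in the monomial basis with strictly positive entries at every non-constant position of degrees $1$, $2$, and $3$. In particular this matrix is strictly positive definite on the non-constant submatrix, and clearly $q_0=1+\delta q_0>0$ on $\overline{\bB_n}$.

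With the robust base point in hand, I would conclude by perturbation. Keeping $\delta q=\delta q_0$ fixed and letting $\sigma$ vary, $M(\sigma,\delta q_0)$ depends polynomially on $\sigma$ and equals the strictly positive definite matrix $M(0,\delta q_0)$ at $\sigma=0$; strict positive definiteness is an open condition, so the matrix remains positive semidefinite for all $\sigma$ in some neighborhood of the origin, and a Cholesky-type decomposition produces the required degree-$\leq 3$ polynomial $p$ with $p(0)=0$ and $\snorm{p(z)}^2=M(\sigma,\delta q_0)$. Positivity $q_\sigma>0$ on $\overline{\bB_n}$ persists because $q_\sigma-q_0=2\sum\sigma_k\Re(z_k^2)$ is uniformly bounded by $2\sum\sigma_k$ there, and $p$ is coprime to $g$ for small $\sigma$ because at $\sigma=0$ we have $g\equiv 1$ and the non-coprime locus is a proper algebraic subvariety in coefficient space, so a small further adjustment of $\delta q$ inside the open PSD region removes any accidental common factor. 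Finally, for the semialgebraic statement, the set of triples $(\sigma,\text{coefficients of }p,\text{coefficients of }q)$ satisfying the polynomial identity $\sabs{g(z)}^2-\snorm{p(z)}^2=q(z,\bar z)(1-\snorm{z}^2)$ together with $q>0$ on $\overline{\bB_n}$ is cut out by polynomial equations in the coefficients plus the first-order condition $\forall z\,(\snorm{z}^2\le 1\Rightarrow q(z,\bar z)>0)$, which is semialgebraic by Tarski--Seidenberg; the admissible $\sigma$ form its projection to the $\sigma$-coordinates and are hence semialgebraic.
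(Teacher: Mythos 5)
Your argument is correct and takes essentially the same route as the paper: recast existence as a positive semidefiniteness problem for the coefficient matrix of $\sabs{g}^2 - q(1-\snorm{z}^2)$, use the diagonal base form $\sum_{d=1}^3 c_d\snorm{z}^{2d}$ (the paper takes $c_1=c_2=c_3=\tfrac13$) whose mixed block is strictly positive definite, perturb by $\sigma$ to stay in the open cone, and invoke Tarski--Seidenberg for the ``moreover.'' The one point where you diverge is the lowest-terms check. You appeal to genericity (``the non-coprime locus is a proper algebraic subvariety,'' then further perturb $\delta q$), which requires justifying that the coprime locus actually meets your admissible open set of $\delta q$'s; as written this is asserted rather than proved. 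The paper instead uses a rank argument that works for the specific $\delta q_0$ and needs no further perturbation: if $g$ and $p$ shared a nonconstant factor $h$, then $r/\sabs{h}^2$ would be the underlying form of a lower-degree proper map, hence its coefficient matrix would have the same rank as that of $r$ (the number of independent squares doesn't change), yet have lower bidegree and hence strictly smaller rank than the full-rank matrix of $r$ for small $\sigma$ — a contradiction. This is a small but real gap in your write-up; either supply the genericity justification or switch to the rank argument.
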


We recall that a semialgebraic set is a subset of $\R^m$ given by a finite
set of polynomial equalities and inequalities.  A key result on semialgebraic sets is
the Tarski--Seidenberg theorem, which says that a projection of a
semialgebraic set onto a subspace is itself semialgebraic.

\begin{proof}[Proof of Proposition~\ref{prop:exist}]
Given a denominator $g(z) = 1+\sum_{k=1}^n \sigma_k z_k^2$,
we have a solution $p$ if $\norm{\frac{p(z)}{g(z)}}^2 = 1$ on the
sphere.  That is, if $\sabs{g(z)}^2-\snorm{p(z)}^2 = 0$ when
$\snorm{z}=1$.  In other words, $p$ is a solution when there is a
polynomial $q(z,\bar{z})$ such that
\begin{equation}
\sabs{g(z)}^2-\snorm{p(z)}^2 = q(z,\bar{z}) \bigl( 1-\snorm{z}^2 \bigr) .
\end{equation}
A degree 3 polynomial $p$ as in the
theorem exists if and only if there exists a $q$ of bidegree $(2,2)$ (degree
2 in $z$ and 2 in $\bar{z}$) such that
\begin{equation} \label{eq:tobepositive}
\sabs{g(z)}^2
-
q(z,\bar{z}) \bigl( 1-\snorm{z}^2 \bigr)
\end{equation}
has a positive semidefinite matrix of coefficients and therefore is a sum of
(hermitian) squares.  If the matrix also has no pure
(holomorphic or antiholomorphic) terms, the
resulting polynomial map $p$ will take the origin to the origin.

Let $[q_{\alpha,\beta}]$ be the matrix of coefficients of $q$.  Let
$[c_{\alpha,\beta}]$ be the matrix of coefficients of
\eqref{eq:tobepositive}.
The matrix
$[c_{\alpha,\beta}]$
being positive definite is a set of inequalities on the elements
$c_{\alpha,\beta}$,
which are polynomials in the various $q_{\alpha,\beta}$ and the $\sigma_k$,
and these inequalities give a semialgebraic set.
The set of possible $\sigma$s is the projection of this
semialgebraic set onto $(\sigma_1,\ldots,\sigma_n)$.  By
Tarski--Seidenberg, this projection is a semialgebraic set
and we have proved the ``Moreover.''

We move to the main conclusion of the proposition.  First, there
exists a third degree polynomial proper map of balls where the matrix
for $\snorm{p(z)}^2$ in the setup above is diagonal, that is,
where each component of the map $p$ is a single monomial.
Furthermore, we wish all the monomials be used, except the
constant.  One possibility for the form corresponding to $\snorm{p(z)}^2$ is
\begin{equation}
\frac{1}{3} \bigl(\snorm{z}^6 + \snorm{z}^4 + \snorm{z}^2 \bigr) .
\end{equation}
Except for the diagonal term corresponding to the constant, all
the diagonal terms of matrix of coefficients are of size at least $\frac{1}{3}$.
Note that
\begin{equation}
1-
\frac{1}{3} \bigl( \snorm{z}^6 + \snorm{z}^4 + \snorm{z}^2 \bigr)
=
q(z,\bar{z}) \bigl( 1-\snorm{z}^2 \bigr) .
\end{equation}
The trick now is to perturb this form (perturb the matrix) while keeping it
of the form $\sabs{g(z)}^2-\snorm{p(z)}^2$ (for a different $p$ of course)
and making sure that it is still divisible by $1-\snorm{z}^2$.

Consider
\begin{equation}
r(z,\bar{z}) =
1-\frac{1}{3} \bigl(\snorm{z}^6 + \snorm{z}^4 + \snorm{z}^2 \bigr)
+
\sum_{k=1}^n \sigma_k (z_k^2 +\bar{z}_k^2) \bigl( 1-\snorm{z}^2 \bigr) .
\end{equation}
Clearly $r = 0$ when $\snorm{z}=1$.  It remains to show that it can be
written as $\sabs{g(z)}^2-\snorm{p(z)}^2$ where $g$ is the given
denominator.

In the matrix of coefficients for $\sabs{g(z)}^2 =
g(z)\bar{g}(\bar{z})$, 
the column and row corresponding to the constant monomial is the same
as $r(z,\bar{z})$.  That is because if we plug in $\bar{z}=0$
\begin{equation}
r(z,0) = 1
+ \sum_{k=1}^n \sigma_k z_k^2 = g(z) = g(z) \bar{g}(0) .
\end{equation}
The matrix of coefficients of
$-r(z,\bar{z})+\sabs{g(z)}^2$ has zeros in the corresponding column and row.
We need to show that $-r(z,\bar{z})+\sabs{g(z)}^2$ can be written
as $\snorm{p(z)}^2$ for some $p \colon \C^n \to \C^N$, where $p(0)=0$.
The entries corresponding to mixed (neither holomorphic nor antiholomorphic) terms
in the matrix are of two types.
The diagonal entries come
from $\frac{1}{3} (\snorm{z}^6 + \snorm{z}^4 + \snorm{z}^2 )$ and are 
all of size~$\frac{1}{3}$ or greater, and they do not depend on $\sigma_k$.
The off-diagonal entries are all multiples of $\sigma_k$ for various $k$.
In particular, if $\sigma_k$ are all small enough, then 
the coefficient matrix for $-r(z,\bar{z})+\sabs{g(z)}^2$ is positive
semidefinite, the matrix has zeros in the pure
(holomorphic or antiholomorphic) terms, and
the $N \times N$ submatrix of mixed terms is positive
definite.
Therefore,
$-r(z,\bar{z})+\sabs{g(z)}^2$ can be written as a sum of $N$ hermitian
squares, that is, there exists a polynomial map $p$ such that
\begin{equation}
r(z,\bar{z}) = \sabs{g(z)}^2-\snorm{p(z)}^2 .
\end{equation}
Since the matrix we are decomposing to get $p$ is the $N \times N$
matrix of the mixed terms, all the components of $p$ vanish at the origin.
It remains to show that $\frac{p}{g}$ is in lowest terms.  If $g$ and all
components of $p$ have a common factor $h$, then dividing by $\sabs{h}^2$
would result in a form corresponding to a proper map of balls of
degree 2 or 1.  In such a case, the rank of the coefficient matrix of
$\frac{r}{\sabs{h}^2}$ is necessarily the same as the rank of $r$,
as the number of linearly independent squares in the expansion does
not change.  But for small
enough $\sigma_k$s, $r$ has a matrix that is of full rank (for bidegree
$(3,3)$).
If such an $h$ existed, the matrix
for $\frac{r}{\sabs{h}^2}$ is necessarily of strictly lower rank as
the bidegree is lower and all the coefficients for monomials of degree 3
are zero.  Thus no $h$ exists and $\frac{p}{g}$ is in lowest terms.
\end{proof}


\section{Rational and polynomial maps of maximal embedding dimension}
\label{section:poly}

The normal form does not necessarily answer the question of when is
a rational proper ball map equivalent to a polynomial.  In a certain
sense, most maps are equivalent to a polynomial map although that map
may not take the origin to the origin, and may involve the ball in different
coordinate patch of the projective space.
As we mentioned before, once the degree is 3 or higher,
there do exist maps that are not spherically equivalent to a polynomial.

Recall that for a fixed degree~$d$, there is a maximal embedding dimension for
a rational proper map of balls.
This dimension is exactly one less than the dimension of the space
of holomorphic polynomials of degree $d$,
as the embedding dimension for $\frac{p}{g}$ is
the dimension of the span of $(p_1,\ldots,p_N,g)$.
In a certain sense, the ``generic'' degree~$d$ map
has the maximal embedding dimension: It is given by a generic real polynomial
$r(z,\bar{z})$ of bidegree $(d,d)$ that is divisible by $1-\snorm{z}^2$
and has 1 negative eigenvalue, and a generic such form is of full rank.

We do not, however, get polynomial maps to the standard model of the ball.
Consider the following two other models of the unit ball,
that are both equivalent to $\bB_n$
via a linear fractional transformation.  First, the hyperquadric or
generalized ball with $1$ positive and $N-1$ negative eigenvalues:
\begin{equation}
\bB_{1,N-1} = \left\{
z \in \C^N : \sabs{z_1}^2 - \sabs{z_2}^2 - \cdots - \sabs{z_{N-1}}^2 < 1
\right\} .
\end{equation}
Second, the Heisenberg model of the ball
\begin{equation}
\bH_{N} = \left\{
z \in \C^N : \Re z_N > \sabs{z_1}^2 + \cdots + \sabs{z_{N-1}}^2 \right\} .
\end{equation}
The linear fractional maps that go between $\bB_n$, $\bB_{1,N-1}$, and
$\bH_N$ also transform the defining forms
$1-\snorm{z}^2$ for $\bB_N$,
$\sabs{z_N}^2 - 1 - \sabs{z_1}^2 - \cdots - \sabs{z_{N-1}}^2$ for
$\bB_{1,N-1}$,
and
$\Re z_N - \sabs{z_1}^2 - \cdots - \sabs{z_{N-1}}^2$ for $\bH_N$.
Therefore, 
Lemma~\ref{lemma:targetautform} holds with these forms instead of
$1-\snorm{z}^2$ if we replace the target ball by $\bB_{1,N-1}$ or $\bH_N$.

\begin{prop} \label{prop:poly}
\pagebreak[2]
Suppose $f \colon \bB_n \to \bB_N$ is a rational proper map of degree $d$
of maximal embedding dimension for degree $d$.
Then there exists a linear fractional map $\tau$ on $\C^N$ such that we get
one of the three following conclusions:
\begin{enumerate}[(i)]
\item
$\tau \circ f \colon \bB_n \to \bB_N$ is a polynomial proper map.
\item
$\tau \circ f \colon \bB_n \to \bB_{1,N-1}$ is a polynomial proper map.
\item
$\tau \circ f \colon \bB_n \to \bH_{N}$ is a polynomial proper map.
\end{enumerate}
\end{prop}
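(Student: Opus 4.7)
The plan is two-phase: first use the maximal embedding dimension hypothesis to pin down the denominator of the desired $\tau$ so that $\tau \circ f$ is automatically a polynomial, and then classify the possible target by analyzing where the pole hyperplane of $\tau$ sits relative to $\overline{\bB_N}$.

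For the first phase, write $f = p/g$ in lowest terms. Since the embedding dimension is maximal, $(g, p_1, \ldots, p_N)$ is a basis of the $(N{+}1)$-dimensional space of polynomials of degree at most $d$, so there exist $c \in \C^N$ and $e \in \C$, unique up to a common scale, with $\langle c, p(z)\rangle + e\, g(z) = 1$. For any $(A, b)$ making the block matrix $\left(\begin{smallmatrix} e & c^T \\ b & A\end{smallmatrix}\right)$ invertible, the linear fractional map $\tau(w) = (Aw + b)/(\langle c, w\rangle + e)$ satisfies $\tau \circ f = Ap + bg$, a polynomial of degree at most $d$. Thus $\tau \circ f$ is automatically a polynomial proper map $\bB_n \to \tau(\bB_N)$, and the remaining task is to arrange via $(A, b)$ that $\tau(\bB_N)$ be one of the three standard models.

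For the second phase, I note that $\Pi := \{w \in \C^N : \langle c, w\rangle + e = 0\}$ is determined by $f$ alone. Since $\langle c, f(z)\rangle + e = 1/g(z)$ and $g$ is nowhere zero on $\overline{\bB_n}$ by Lemma~\ref{lemma:chiappari}, one has $f(\overline{\bB_n}) \cap \Pi = \emptyset$, though $\Pi$ may still meet $\overline{\bB_N}$. A short Cauchy--Schwarz calculation yields the exhaustive trichotomy: $\Pi \cap \overline{\bB_N} = \emptyset$ when $\sabs{e} > \snorm{c}$; $\Pi$ is tangent to $\partial \bB_N$ at a single point when $\sabs{e} = \snorm{c}$; and $\Pi \cap \bB_N$ is a nonempty complex hyperplane when $\sabs{e} < \snorm{c}$. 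In the disjoint case, I set $\zeta := -\bar c/\bar e \in \bB_N$: the denominator of the involution $\varphi_\zeta$ from \eqref{eq:varphia} is proportional to $\langle c, w\rangle + e$, so $\tau = U \varphi_\zeta$ for any unitary $U$ is an automorphism of $\bB_N$, yielding conclusion (i). In the tangent and transverse cases, the plan is to realize $\tau$ as a standard linear fractional map $\bB_N \to \bH_N$ or $\bB_N \to \bB_{1,N-1}$ whose pole hyperplane is exactly $\Pi$: fix a reference such map $\tau_0$ with pole hyperplane $\Pi_0$, choose $\psi \in \Aut(\bB_N)$ with $\psi^{-1}(\Pi_0) = \Pi$, and set $\tau := \tau_0 \circ \psi$.

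The step I expect to require the most care is the construction of $\psi$ in the tangent and transverse cases. This reduces to the transitivity of $\Aut(\bB_N) \cong PU(1, N)$ on each of the orbits of hyperplanes in $\bP^N$ relative to $\partial \bB_N$: transitivity on tangent hyperplanes follows from transitivity on $\partial \bB_N$, and transitivity on transverse hyperplanes follows from the action on the exterior component of $\bP^N \setminus \partial \bB_N$. Once the appropriate $\psi$ is found, rescaling $\tau_0 \circ \psi$ so that its denominator agrees with $\langle c, w\rangle + e$ exactly determines the corresponding $(A, b)$, and then $\tau \circ f = Ap + bg$ is the required polynomial proper map to $\bH_N$ in case (iii) or $\bB_{1, N-1}$ in case (ii).
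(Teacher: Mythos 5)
Your proof is correct in structure and takes a genuinely different route from the paper's. The paper works directly with the Hermitian coefficient matrix $C$ of the underlying form $r=\sabs{g}^2-\snorm{p}^2$ and studies the affine linear function $\gamma \mapsto \det(C - \gamma J)$, where $J$ is the rank-one matrix corresponding to the constant monomial: if this function is a nonzero constant, a signature count shows the mixed-terms block is negative semidefinite of rank $N-1$, producing the Heisenberg case; if it is nonconstant, the sign of its unique zero $\gamma$ dictates whether one lands in $\bB_N$ or $\bB_{1,N-1}$. Your argument instead exploits the maximal embedding dimension to write $1 = \langle c, p\rangle + e\,g$ uniquely, so that any linear fractional $\tau$ with pole hyperplane $\Pi = \{\langle c, w\rangle + e = 0\}$ automatically makes $\tau\circ f = Ap + bg$ polynomial, and then the Cauchy--Schwarz trichotomy on $\sabs{e}$ versus $\snorm{c}$ classifies $\Pi$ as disjoint from, tangent to, or transverse to $\partial\bB_N$. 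This is a cleaner geometric picture of the same phenomenon: the paper's $\gamma$ being positive, absent, or negative corresponds exactly to the polar point $\zeta = -\bar c/\bar e$ being interior, boundary, or exterior.

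The cost of your route is that it leans on transitivity of $\Aut(\bB_N)$ on each of the three orbits of hyperplanes, which you correctly flag as the delicate step. Transitivity on tangent hyperplanes via the boundary is immediate, but transitivity on transverse (and disjoint) hyperplanes is essentially Witt's extension theorem for the Hermitian form of signature $(1,N)$; the paper avoids invoking any such group-theoretic input and works out the rank and signature of $C-\gamma J$ by hand. Your argument as sketched is sound, but to be complete you would need to either cite Witt's theorem or verify the orbit claim directly (for instance via the polarity $\Pi \mapsto \zeta$ and the standard transitivity of $\Aut(\bB_N)$ extended to $\CP^N$ on the three components determined by the sign of the defining form). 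One small omission worth closing: when $e=0$ the polar point $\zeta$ is undefined, but since $c\ne 0$ the hyperplane $\Pi$ passes through the origin and is automatically transverse, so that case is subsumed in (ii); you should say so explicitly since your formula for $\zeta$ does not cover it.
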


To decide which conclusion holds, write $f = \frac{p}{g}$, and
$r(z,\bar{z}) = \sabs{g(z)}^2-\snorm{p(z)}^2$ and normalize so that $r(0,0)
= 1$.  If the coefficient matrix of the mixed terms, that is,
$r(z,\bar{z}) - r(z,0) - r(0,\bar{z}) + 1$,
has rank less than $N$, then item (iii) holds.  Otherwise, we will see
below that $r(z,\bar{z})-\gamma$ is of rank $N$ for some $\gamma$.
Item (i) holds if $\gamma > 0$ and item (ii) holds if $\gamma < 0$.

\begin{proof}
Let $f = \frac{p}{g}$ as usual, normalize so that $g(0) = 1$.
By Lemma~\ref{lemma:targetautform}, to prove item (i) of the proposition,
we must show that
\begin{equation}
r(z,\bar{z})=\sabs{g(z)}^2 - \snorm{p(z)}^2 = \gamma \bigl(1 - \snorm{P(z)}^2\bigr)
\end{equation}
for some constant $\gamma > 0$ and some polynomial map $P(z)$ to $\C^N$.
To prove (ii), we need to show that
\begin{equation}
r(z,\bar{z}) = \gamma\bigl(\sabs{G(z)}^2 - 1 - \snorm{P(z)}^2\bigr)
\end{equation}
for some polynomial $G(z)$ and a polynomial map $P(z)$ to $\C^{N-1}$.
To prove (iii), we need to show that
\begin{equation}
r(z,\bar{z}) = \Re G(z) - \snorm{P(z)}^2
\end{equation}
for some polynomial $G(z)$ and
some polynomial map $P(z)$ to $\C^{N-1}$.
The proof lies in showing that one of the three possibilities holds.

Let $C$ be the hermitian coefficient matrix of the form $r(z,\bar{z})$.
That $N$ is the maximal embedding dimension for this degree,
means that $C$ has full rank.
The decomposition $\sabs{g(z)}^2 - \snorm{p(z)}^2$
is simply writing $C$ as a sum of hermitian rank one matrices.  That is,
if $\sZ = (z_1^d,\ldots,1)^t$ is the column vector of all monomials
of degree $d$ or less, then for a column vector $v$ composed of
the complex conjugates of the coefficients of $g$, we have $g(z) = v^* \sZ$
(where $v^*$ is the complex conjugate transpose), and
$\sabs{g(z)}^2 = \sZ^* v v^* \sZ$.
Hence, if $w_j$ is the vector of complex conjugates of the coefficients of
$p_j(z)$, then
\begin{equation}
C = v v^* - w_1 w_1^* - \cdots - w_N w_N^* .
\end{equation}
Note that $C$ is a full-rank matrix of size $(N+1) \times (N+1)$.
Let $J$ be the rank $1$ matrix that is the coefficient matrix of the constant
form $1$.  That is, $J$ is a matrix of all zeros except a~1 in the entry
corresponding to the constant.
Thus to find a decomposition giving (i) or (ii) we need to find 
a $\gamma$ such that $C - \gamma J$ is of rank $N$ (it can only be of rank
$N+1$ or $N$).

We are looking for zeros of $\det(C-\gamma J)$.  This function is constant
and has no zeros if and only if the matrix of coefficients of the mixed
terms $r(z,\bar{z}) - r(z,0) - r(0,\bar{z}) + 1$ is singular.  As
$r(z,0) + r(0,\bar{z}) - 1$ has rank 2, we find that in this case the matrix
of mixed terms must be of rank $N-1$ exactly.  But then, since 
$r(z,0) + r(0,\bar{z}) - 1$ has one positive and one negative eigenvalue,
the matrix of mixed terms must be negative semidefinite.
So write
\begin{equation}
r(z,\bar{z}) - r(z,0) - r(0,\bar{z}) + 1 = -\snorm{P(z)}^2 ,
\end{equation}
where $P$ is a polynomial map to $\C^{N-1}$.  Then write
\begin{equation}
r(z,0) + r(0,\bar{z}) - 1 = \Re G(z) ,
\end{equation}
where $G(z) = 2r(z,0) - 1$.  And so we obtain (iii).

So suppose that $\det(C-\gamma J)$ is not constant.
It is an affine linear function, so there must exist
a $\gamma$ where it vanishes, that is,
where $C-\gamma J$ is of rank $N$,
and $C-\gamma J$ is a sum of $N$ rank 1 matrices.
The signature of $C-\gamma J$ depends on the sign of $\gamma$.
If $\gamma < 0$, then $r(z,\bar{z}) + (-\gamma)$ must have one positive, $N-1$ negative
eigenvalues, so we could rewrite $r$ as
\begin{equation}
r(z,\bar{z}) = \snorm{G(z)}^2 -\sabs{\sqrt{-\gamma}}^2 -
\snorm{P(z)}^2
\end{equation}
for some polynomial $G(z)$ and a polynomial map $P(z)$ to $\C^{N-1}$.  That gives (ii).

Finally, suppose $\gamma > 0$.
Then $C-\gamma J$ must have $N$ negative eigenvalues.
By decomposing this matrix into rank 1 matrices we find the representation
\begin{equation}
r(z,\bar{z}) = \sabs{\sqrt{\gamma}}^2-\snorm{P(z)}^2
\end{equation}
for some polynomial $P$.  Part (i) follows.
\end{proof}

It is good to remark why the idea of the proof does not work if the
embedding dimension is not the maximum possible:
If $C$ is not of full rank,
then the rank of 
$C-\gamma J$ may never drop, but the rank of 
the matrix of mixed terms could still be of rank $N$ rather than $N-1$.

The set of proper maps equivalent to polynomial maps to $\bB_N$
(resp.\ $\bB_{1,N-1}$)
of maximal embedding dimension is open.
We say a rational proper map $f \colon \bB_n \to \bB_N$
is \emph{target equivalent to a polynomial proper map to $\bB_N$
(resp. $\bB_{1,N-1}$)} if there exists a linear fractional transformation
$\tau$ such that $\tau \circ f$ is a polynomial proper map to $\bB_N$ (resp.
$\bB_{1,N-1}$).
Below,
denote $\snorm{f}_{\bB_n} = \sup_{z \in \bB_n} \snorm{f(z)}$.

\begin{prop}
Suppose $f \colon \bB_n \to \bB_N$
is a rational proper map of degree $d$
of maximal embedding dimension for degree $d$
that is target equivalent to a polynomial proper map to $\bB_N$
(resp.\ $\bB_{1,N-1}$).  Then there exists an $\epsilon > 0$
such that if $F \colon \bB_n \to \bB_N$ is a rational proper map 
of degree $d$ where
$\snorm{f-F}_{\bB_n} < \epsilon$, then
$F$ is target equivalent to a polynomial proper map to $\bB_N$
(resp.\ $\bB_{1,N-1}$).
\end{prop}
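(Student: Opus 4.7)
The plan is to reduce the openness assertion to openness of an algebraic condition on the hermitian coefficient matrix of the underlying form, and then to transfer this openness through the (continuous) map from rational functions to coefficients. From the proof of Proposition~\ref{prop:poly}, cases (i) and (ii) are characterized by the sign of a unique root of an affine linear polynomial in $\gamma$ together with a signature condition on the resulting rank-$N$ matrix, both of which are manifestly open conditions in matrix space.

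First, I would normalize so that $g(0) = G(0) = 1$ (possible by Lemma~\ref{lemma:chiappari}, since the denominators are nonvanishing on $\overline{\bB_n}$) and argue that $\snorm{f-F}_{\bB_n} < \epsilon$ forces coefficient-wise closeness of $(P,G)$ to $(p,g)$. Since both maps have fixed degree $d$ and live in a finite-dimensional space, Cauchy estimates on a slightly smaller closed subball give convergence of all derivatives of $F$ to those of $f$ at the origin; the relation $P = F \cdot G$ together with $G(0) = 1$ then determines the coefficients of $(P,G)$ from the Taylor series of $F$ by a continuous linear inversion. Consequently, the hermitian coefficient matrix $C_F$ of the underlying form $r_F = \sabs{G}^2 - \snorm{P}^2$ is close in operator norm to $C_f$.

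Next, I would invoke the analysis from the proof of Proposition~\ref{prop:poly}. Let $J$ denote the rank-one matrix with a single $1$ in the (constant, constant) entry. Maximal embedding dimension is the open condition that $C$ has full rank $N+1$. The hypothesis that $f$ is target equivalent to a polynomial map to $\bB_N$ (resp.\ $\bB_{1,N-1}$) is equivalent to the affine linear polynomial $\det(C_f - \gamma J)$ having a positive root $\gamma_0 > 0$ (resp.\ a negative root $\gamma_0 < 0$) at which $C_f - \gamma_0 J$ is negative semidefinite of rank $N$ (resp.\ has signature $(1,N-1)$ together with one zero eigenvalue). In particular, $\det(C_f - \gamma J)$ is nonconstant, since its coefficient of $\gamma$ is (up to sign) an $N \times N$ minor of $C_f$, a continuous function of $C_f$.

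The rest is pure continuity. For $F$ sufficiently close to $f$, the matrix $C_F$ remains of full rank $N+1$, the leading coefficient of $\det(C_F - \gamma J)$ remains nonzero, and the unique root $\gamma_F$ depends continuously on $C_F$ and hence stays close to $\gamma_0$; in particular $\gamma_F$ has the same sign. At that root, $C_F - \gamma_F J$ has rank exactly $N$, and its $N$ nonzero eigenvalues are close to those of $C_f - \gamma_0 J$ by continuity of the spectrum, so the signatures agree. Invoking Proposition~\ref{prop:poly} for $F$ then produces the desired linear fractional map sending $F$ to a polynomial proper map of the same type as for $f$. The main obstacle will be the first step, namely, passing from sup-norm closeness of the two rational maps to closeness of the underlying coefficient matrices; the rest is a routine continuity/open-condition argument in a finite-dimensional matrix space.
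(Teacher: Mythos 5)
Your proof is correct and follows essentially the same route as the paper: pass from sup-norm closeness of $f$ and $F$ to closeness of the hermitian coefficient matrices of the underlying forms, then observe that the root $\gamma$ of the affine polynomial $\det(C-\gamma J)$ depends continuously on $C$ and so its sign persists under small perturbation, placing $F$ in the same case of Proposition~\ref{prop:poly} as $f$. Your additional remarks on the persistence of full rank, of the nonvanishing of the leading coefficient, and on the eigenvalue/signature continuity make explicit what the paper leaves implicit; the only place where your phrasing is looser than it might be is in calling the recovery of $(P,G)$ from the Taylor data of $F$ a ``linear inversion'' (it is the solution of a linear system whose coefficients depend continuously on $F$), but this does not affect the correctness of the argument.
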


\begin{proof}
Write $f = \frac{p}{g}$ and $F=\frac{P}{G}$ and write
the underlying forms
\begin{equation}
r(z,\bar{z})=\sabs{g(z)}^2 - \snorm{p(z)}^2
\qquad \text{and}\qquad
R(z,\bar{z})=\sabs{G(z)}^2 - \snorm{P(z)}^2 ,
\end{equation}
where we assume $r(0,0)=R(0,0)=1$ as usual,
which
also normalizes $\sabs{g(0)}$ and $\sabs{G(0)}$.
We further assume $g(0)$ and $G(0)$ to be positive.
The condition $\snorm{f-F}_{\bB_n} < \epsilon$
is equivalent to
the coefficients of the Taylor series at the origin up to any
fixed degree being within some~$\epsilon'$.
From that we find, as both $f$ and $F$ are of the same degree $d$,
that this condition is equivalent to 
the coefficients of the coefficient matrix of $r$ and $R$ being close.
The argument in the proof of Proposition~\ref{prop:poly}
means that we can choose either a positive or negative $\gamma$ such that
$\det(C-\gamma J)=0$.
A small enough perturbation of the underlying matrix also allows the choice of
such a $\gamma$ of the same sign, so the proposition follows.
\end{proof}

In particular, any rational proper map that is a small enough perturbation 
of a polynomial map of maximal embedding dimension for a given degree
is spherically equivalent to a polynomial proper map.



\begin{bibdiv}
\begin{biblist}

\bib{Alexander}{article}{
   author={Alexander, H.},
   title={Proper holomorphic mappings in $C^{n}$},
   journal={Indiana Univ. Math. J.},
   volume={26},
   date={1977},
   number={1},
   pages={137--146},
   issn={0022-2518},
   review={\MR{422699}},
   doi={10.1512/iumj.1977.26.26010},
}

\bib{CatlinDAngelo}{article}{
   author={Catlin, David W.},
   author={D'Angelo, John P.},
   title={A stabilization theorem for Hermitian forms and applications to
   holomorphic mappings},
   journal={Math. Res. Lett.},
   volume={3},
   date={1996},
   number={2},
   pages={149--166},
   issn={1073-2780},
   review={\MR{1386836}},
   doi={10.4310/MRL.1996.v3.n2.a2},
}

\bib{Chiappari}{article}{
   author={Chiappari, Stephen A.},
   title={Holomorphic extension of proper meromorphic mappings},
   journal={Michigan Math. J.},
   volume={38},
   date={1991},
   number={2},
   pages={167--174},
   issn={0026-2285},
   review={\MR{1098854}},
   doi={10.1307/mmj/1029004326},
}

\bib{CimaSuffridge}{article}{
   author={Cima, J. A.},
   author={Suffridge, T. J.},
   title={Boundary behavior of rational proper maps},
   journal={Duke Math. J.},
   volume={60},
   date={1990},
   number={1},
   pages={135--138},
   issn={0012-7094},
   review={\MR{1047119}},
   doi={10.1215/S0012-7094-90-06004-1},
}

\bib{Courant}{book}{
   author={Courant, R.},
   title={Dirichlet's Principle, Conformal Mapping, and Minimal Surfaces},
   note={Appendix by M. Schiffer},
   publisher={Interscience Publishers, Inc., New York},
   date={1950},
   pages={xiii+330},
   review={\MR{0036317}},
}

\bib{DAngelo:book}{book}{
   author={D'Angelo, John P.},
   title={Several complex variables and the geometry of real hypersurfaces},
   series={Studies in Advanced Mathematics},
   publisher={CRC Press, Boca Raton, FL},
   date={1993},
   pages={xiv+272},
   isbn={0-8493-8272-6},
   review={\MR{1224231}},
}

\bib{DAngelo:herm}{book}{
   author={D'Angelo, John P.},
   title={Hermitian analysis},
   series={Cornerstones},
   note={From Fourier series to Cauchy-Riemann geometry;
   Second edition of [ MR3134931]},
   publisher={Birkh\"{a}user/Springer, Cham},
   date={2019},
   pages={x+229},
   isbn={978-3-030-16513-0},
   isbn={978-3-030-16514-7},
   review={\MR{3931729}},
   doi={10.1007/978-3-030-16514-7},
}

\bib{DAngelo:spheresbook}{book}{
   author={D'Angelo, John P.},
   title={Rational sphere maps},
   series={Progress in Mathematics},
   volume={341},
   publisher={Birkh\"{a}user/Springer, Cham},
   date={[2021] \copyright 2021},
   pages={xiii+233},
   isbn={978-3-030-75808-0},
   isbn={978-3-030-75809-7},
   review={\MR{4293989}},
   doi={10.1007/978-3-030-75809-7},
}

\bib{DAngelo:preprint}{unpublished}{
    author={D'Angelo, John P.},
    title={Rational CR maps},
}

\bib{DHX}{article}{
   author={D'Angelo, John P.},
   author={Huo, Zhenghui},
   author={Xiao, Ming},
   title={Proper holomorphic maps from the unit disk to some unit ball},
   journal={Proc. Amer. Math. Soc.},
   volume={145},
   date={2017},
   number={6},
   pages={2649--2660},
   issn={0002-9939},
   review={\MR{3626518}},
   doi={10.1090/proc/13425},
}

\bib{DL:homotopies}{article}{
   author={D'Angelo, John P.},
   author={Lebl, Ji\v{r}\'{\i}},
   title={Homotopy equivalence for proper holomorphic mappings},
   journal={Adv. Math.},
   volume={286},
   date={2016},
   pages={160--180},
   issn={0001-8708},
   review={\MR{3415683}},
   doi={10.1016/j.aim.2015.09.007},
}

\bib{DX}{article}{
   author={D'Angelo, John P.},
   author={Xiao, Ming},
   title={Symmetries in CR complexity theory},
   journal={Adv. Math.},
   volume={313},
   date={2017},
   pages={590--627},
   issn={0001-8708},
   review={\MR{3649233}},
   doi={10.1016/j.aim.2017.04.014},
}

\bib{Dor}{article}{
   author={Dor, Avner},
   title={Proper holomorphic maps between balls in one co-dimension},
   journal={Ark. Mat.},
   volume={28},
   date={1990},
   number={1},
   pages={49--100},
   issn={0004-2080},
   review={\MR{1049642}},
   doi={10.1007/BF02387366},
}

\bib{Ebenfelt13}{article}{
   author={Ebenfelt, Peter},
   title={Partial rigidity of degenerate CR embeddings into spheres},
   journal={Adv. Math.},
   volume={239},
   date={2013},
   pages={72--96},
   issn={0001-8708},
   review={\MR{3045142}},
   doi={10.1016/j.aim.2013.02.011},
}

\bib{Faran:B2B3}{article}{
   author={Faran, James J.},
   title={Maps from the two-ball to the three-ball},
   journal={Invent. Math.},
   volume={68},
   date={1982},
   number={3},
   pages={441--475},
   issn={0020-9910},
   review={\MR{669425}},
   doi={10.1007/BF01389412},
}

\bib{Faran:firstgap}{article}{
   author={Faran, James J.},
   title={The linearity of proper holomorphic maps between balls in the low
   codimension case},
   journal={J. Differential Geom.},
   volume={24},
   date={1986},
   number={1},
   pages={15--17},
   issn={0022-040X},
   review={\MR{857373}},
}

\bib{FHJZ}{article}{
   author={Faran, James},
   author={Huang, Xiaojun},
   author={Ji, Shanyu},
   author={Zhang, Yuan},
   title={Polynomial and rational maps between balls},
   journal={Pure Appl. Math. Q.},
   volume={6},
   date={2010},
   number={3, Special Issue: In honor of Joseph J. Kohn.},
   pages={829--842},
   issn={1558-8599},
   review={\MR{2677315}},
   doi={10.4310/PAMQ.2010.v6.n3.a10},
}

\bib{Forstneric}{article}{
   author={Forstneri\v{c}, Franc},
   title={Extending proper holomorphic mappings of positive codimension},
   journal={Invent. Math.},
   volume={95},
   date={1989},
   number={1},
   pages={31--61},
   issn={0020-9910},
   review={\MR{969413}},
   doi={10.1007/BF01394144},
}

\bib{Hamada05}{article}{
   author={Hamada, Hidetaka},
   title={Rational proper holomorphic maps from ${\mathbf{B}}^n$ into
${\mathbf{B}}^{2n}$},
   journal={Math. Ann.},
   volume={331},
   date={2005},
   number={3},
   pages={693--711},
   issn={0025-5831},
   review={\MR{2122546}},
   doi={10.1007/s00208-004-0606-2},
}

\bib{HornJohnson}{book}{
   author={Horn, Roger A.},
   author={Johnson, Charles R.},
   title={Matrix analysis},
   edition={2},
   publisher={Cambridge University Press, Cambridge},
   date={2013},
   pages={xviii+643},
   isbn={978-0-521-54823-6},
   review={\MR{2978290}},
}

\bib{Huang:firstgap}{article}{
   author={Huang, Xiaojun},
   title={On a linearity problem for proper holomorphic maps between balls
   in complex spaces of different dimensions},
   journal={J. Differential Geom.},
   volume={51},
   date={1999},
   number={1},
   pages={13--33},
   issn={0022-040X},
   review={\MR{1703603}},
}

\bib{HJX06}{article}{
   author={Huang, Xiaojun},
   author={Ji, Shanyu},
   author={Xu, Dekang},
   title={A new gap phenomenon for proper holomorphic mappings from $B^n$
   into $B^N$},
   journal={Math. Res. Lett.},
   volume={13},
   date={2006},
   number={4},
   pages={515--529},
   issn={1073-2780},
   review={\MR{2250487}},
   doi={10.4310/MRL.2006.v13.n4.a2},
}

\bib{HJY14}{article}{
   author={Huang, Xiaojun},
   author={Ji, Shanyu},
   author={Yin, Wanke},
   title={On the third gap for proper holomorphic maps between balls},
   journal={Math. Ann.},
   volume={358},
   date={2014},
   number={1-2},
   pages={115--142},
   issn={0025-5831},
   review={\MR{3157993}},
   doi={10.1007/s00208-013-0952-z},
}

\bib{Lebl:normal}{article}{
   author={Lebl, Ji\v{r}\'{\i}},
   title={Normal forms, Hermitian operators, and CR maps of spheres and
   hyperquadrics},
   journal={Michigan Math. J.},
   volume={60},
   date={2011},
   number={3},
   pages={603--628},
   issn={0026-2285},
   review={\MR{2861091}},
   doi={10.1307/mmj/1320763051},
}


\end{biblist}
\end{bibdiv}


\end{document}